\documentclass[reqno,11pt]{amsart}
\hoffset         -0.5 in
\voffset        -0.3in
\textwidth      5.9in
\textheight     8.8in
\usepackage{amsmath,amsfonts,amssymb,amsthm}

\usepackage{latexsym,bm,graphicx}
\usepackage{mathrsfs}
\usepackage{color}
\title{Sharp Spectral Gaps on Metric Measure Spaces}
\author{Yin Jiang}
\address{Department of Mathematics, Sun Yat-sen University, Guangzhou, 510275,China}
\email[Yin Jiang]{Jiangy39@mail2.sysu.edu.cn}
\author{Hui-Chun Zhang}
\address{Department of Mathematics, Sun Yat-sen University, Guangzhou, 510275,China}
\email[Hui-Chun Zhang]{zhanghc3@mail.sysu.edu.cn}

\newtheorem{thm}{Theorem}[section]
\newtheorem{prop}[thm]{Proposition}
\newtheorem{lem}[thm]{Lemma}
\newtheorem{cor}[thm]{Corollary}

\theoremstyle{definition}
\theoremstyle{remark}

\newtheorem{defn}[thm]{Definition}
\newtheorem{remark}[thm]{Remark}

\numberwithin{equation}{section}

\newcommand{\ls}{\leqslant}
\newcommand{\gs}{\geqslant}

\newcommand{\rd}{\mathrm{d}}
\newcommand{\cE}{\mathcal{E}}
\newcommand{\bV}{\mathbb{V}}
\newcommand{\bVi}{\mathbb{V}_{\kern-2pt\infty}}

\newcommand{\bR}{\mathbb{R}}
\newcommand{\fs}{\mathfrak{s}}

\newcommand{\eps}{\epsilon}

\newcommand{\hla}{\hat{\lambda}}

\def\Xint#1{\mathchoice
{\XXint\displaystyle\textstyle{#1}}%
{\XXint\textstyle\scriptstyle{#1}}%
{\XXint\scriptstyle\scriptscriptstyle{#1}}%
{\XXint\scriptscriptstyle\scriptscriptstyle{#1}}%
\!\int}
\def\XXint#1#2#3{{\setbox0=\hbox{$#1{#2#3}{\int}$ }
\vcenter{\hbox{$#2#3$ }}\kern-.6\wd0}}

\def\dashint{\Xint-}

\begin{document}

\begin{abstract}
In this paper, we extend the sharp lower bounds of spectal gap, due to Chen-Wang \cite{chen1993application,chen1997general}, Bakry-Qian \cite{bakry2000some} and Andrews-Clutterbuck \cite{andrews2013sharp}, from smooth Riemaniannian manifolds to general metric measure spaces with Riemannian curvature-dimension condition $RCD^*(K,N)$.

\end{abstract}
\maketitle

\section{introduction}
Let $(X,d,m)$ be a compact metric measure space. Given a Lipschitz function $f:X\to \mathbb R$, its point-wise Lipschitz constant ${\rm Lip}f(x)$ is defined as
$${\rm Lip}f(x):=\limsup_{y\to x}\frac{|f(y)-f(x)|}{d(x,y)}.$$
In this paper, we are concerned with the spectral gap
\begin{equation}\label{eq1.1}
\lambda_1(X):=\inf\Big\{\frac{\int_X ({\rm Lip}f)^2\rd m}{\int_Xf^2\rd m}:\ f\in Lip(X)\backslash\{0\} \quad {\rm and}\quad \int_Xf\rd m=0\Big\},
\end{equation}
where $Lip(X)$ is the set of Lipschitz functions on $X$.

When $M$ is a compact smooth Riemannian manifold without boundary (or with a convex boundary $\partial M$), the study of the lower bounds of the first eigenvalue $\lambda_1$ of the Laplace-Beltrami operator $\Delta$ has a long history. See for example, Lichnerowicz \cite{lichnerowicz1958geometrie}, Cheeger \cite{cheeger1970lower}, Li-Yau \cite{li1980estimates}, and so on. For an overview the reader is referred to the introduction of \cite{besson2004isoperimetric,bakry2000some,ledoux2004spectral} and Chapter 3 in book \cite{schoen1994lectures}, and references therein.
In particular the following comparison theorem for $\lambda_1$ has been established by Chen-Wang \cite{chen1993application,chen1997general}, Bakry-Qian \cite{bakry2000some} and Andrews-Clutterbuck \cite{andrews2013sharp} independently, via three different methods.

\begin{thm} [Chen-Wang \cite{chen1993application,chen1997general}, Bakry-Qian \cite{bakry2000some}, Andrews-Clutterbuck \cite{andrews2013sharp}]\label{in:BQ}
Let $M$ be an $n$-dimensional compact Riemannian manifold without boundary (or with a convex boundary). Suppose that the Ricci curvature $Ric(M)\gs K$ and that
 the diameter $\ls d$. Let $\lambda_1$ be the first (non-zero) eigenvalue (with Neumann boundary condition if the boundary is not empty). Then
\[
\lambda_1(M) \gs \hla(K,N,d)
\]
where $\hla(K,N,d)$ denotes the first non-zero Neumann eigenvalue of the following one-dimensional model:
\[
v''(x)-(N-1)T(x)v'(x)=-\lambda v(x) \qquad x\in (-\frac{d}{2}, \frac{d}{2}), \qquad v'(-\frac{d}{2})=v'(\frac{d}{2})=0
\]
and
\[
T(x)=\left\{
\begin{array}{ll}
\sqrt{\frac{K}{N-1}} \tan(\sqrt{\frac{K}{N-1}} x)& \qquad \text{ \rm if }K \gs 0,\\
\sqrt{\frac{-K}{N-1}} \tan(\sqrt{\frac{-K}{N-1}} x)& \qquad \text{ \rm if }K < 0.
\end{array}
\right.
\]
\end{thm}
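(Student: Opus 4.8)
Although Theorem~\ref{in:BQ} is stated for smooth manifolds, I would prove it in the generality of $\RCD^*(K,N)$ metric measure spaces: a compact $n$-manifold with $\mathrm{Ric}\gs K$ is $\RCD^*(K,n)$, and the Neumann problem on a manifold with convex boundary reduces to the boundaryless case by a reflection argument, so the stated inequality (with $N=n$) is a special case. Among the three classical proofs I would transplant Bakry--Qian's, since it rests entirely on the $\Ga$-calculus and the Laplacian comparison theorem, both available on $\RCD^*(K,N)$ spaces. So let $(X,d,m)$ be such a space with $\mathrm{diam}(X)\ls d$ and let $u$ be a first eigenfunction, $\Delta u=-\lambda_1 u$. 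On an $\RCD^*(K,N)$ space $u$ is bounded; from $u=e^{\lambda_1 t}P_t u$ and the $L^\infty$-to-$\Lip$ regularization of the heat semigroup, $u$ is Lipschitz; and $u\in D(\Delta)$ with $\Delta u=-\lambda_1 u\in W^{1,2}\cap L^\infty$ and $|\nabla u|\in W^{1,2}$. After normalizing $u$ I would introduce the one-dimensional \emph{model comparison function} $v$ --- the solution of $v''-(N-1)T v'=-\lambda_1 v$ on an interval with Neumann endpoint conditions, normalized (using the freedom to translate the model density and to rescale) so that its first non-zero Neumann eigenvalue equals $\lambda_1$ and its range coincides with that of $u$ --- which is then strictly increasing, together with the \emph{transported function} $\beta:=v^{-1}\circ u$. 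Near $u$'s extrema $\beta$ is still Lipschitz despite the vanishing of $v'$ there, a point conveniently handled by first enlarging the range of $v$ by $\eps$ and letting $\eps\downarrow 0$.

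The heart of the matter is the \emph{gradient comparison}
\[
|\nabla\beta|\ls 1\qquad m\text{-a.e. on }X,
\]
equivalently $|\nabla u|\ls v'\!\bigl(v^{-1}(u)\bigr)$. The chain rule for the measure-valued Laplacian gives $\Delta u=v'(\beta)\Delta\beta+v''(\beta)|\nabla\beta|^2$; substituting $\Delta u=-\lambda_1 v(\beta)$ and the model ODE yields the pointwise identity
\[
\Delta\beta=-\lambda_1\,\frac{v(\beta)}{v'(\beta)}\bigl(1-|\nabla\beta|^2\bigr)-(N-1)\,T(\beta)\,|\nabla\beta|^2,
\]
so $\Delta\beta$ is an explicit affine function of $G:=|\nabla\beta|^2$. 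I would then run a maximum principle for $G$: at a point realizing $\mu:=\operatorname{ess\,sup}G$ (if $\mu=0$ we are done) one has $\nabla G=0$, hence $\nabla|\nabla\beta|=0$, so the \emph{self-improved} Bochner inequality on $\RCD^*(K,N)$ collapses there to $\Ga(\beta)\gs\frac{1}{N-1}(\Delta\beta)^2+K\,G$. Expressing the cross term $\langle\nabla\Delta\beta,\nabla\beta\rangle$ through the identity above and the relations $v''=(N-1)Tv'-\lambda_1 v$ and $T'=\tfrac{K}{N-1}+T^2$, the inequality reduces to a scalar inequality $\Psi(\beta_0,\mu)\ls 0$ in which the one-dimensional model has been rigged precisely so that $\Psi(\,\cdot\,,1)\equiv 0$ --- this is exactly where the \emph{improved} dimensional constant $N-1$, hence the self-improvement of the Bochner inequality, is indispensable --- while $\Psi(\beta_0,\mu)>0$ for $\mu>1$; therefore $\mu\ls 1$.

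Granting the gradient comparison, $\beta$ is $1$-Lipschitz, so the length $L$ of the model interval equals $\operatorname{osc}_X\beta\ls\mathrm{diam}(X)\ls d$. At this point $\lambda_1(X)$ has been pinned to the first Neumann eigenvalue of a one-dimensional $\CD(K,N)$ density on an interval of length $L\ls d$; invoking the extremality of the symmetric model among all such one-dimensional densities together with the monotonicity $\hla(K,N,L)\gs\hla(K,N,d)$ --- the elementary ``model computation'' underlying all three classical proofs --- one concludes $\lambda_1\gs\hla(K,N,d)$.

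The main obstacle is the maximum-principle step for $G=|\nabla\beta|^2$: on an $\RCD^*(K,N)$ space $G$ lies only in $W^{1,2}\cap L^\infty$, so ``$\Delta G\ls 0$ at a maximum'' is not literally meaningful. Instead one must work with the measure-valued self-improved $\Ga$-inequality and Gigli's calculus of $L^2$-normed modules, show that $G$ is a weak subsolution of a uniformly elliptic equation with a favourable zero-order term, and invoke the weak maximum principle valid on $\RCD$ spaces (which are locally doubling and support a Poincar\'e inequality, so De Giorgi--Nash--Moser theory applies), with every chain, Leibniz and localization step justified inside the $\Ga$-calculus; the mild singularity of $v^{-1}$ at $u$'s extrema is the secondary technical point, dispatched by the $\eps$-approximation above. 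An alternative route I would keep in reserve is to transplant Andrews--Clutterbuck's modulus-of-continuity argument --- propagate the inequality $P_t u(x)-P_t u(y)\ls 2Ce^{-\hla t}v\bigl(d(x,y)/2\bigr)$ along the heat flow on the product $\RCD$ space $X\times X$ using the Laplacian comparison for the distance function in the two-point maximum principle, then let $t\to\infty$ --- but the non-smoothness of $d(\cdot,\cdot)$ on the cut locus makes that at least as delicate.
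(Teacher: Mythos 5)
The paper does not reprove Theorem \ref{in:BQ} itself (it is quoted from the cited references); what it proves is the $\RCD^*(K,N)$ generalization, Theorem \ref{thm:main}, and your strategy — transplant Bakry--Qian's gradient comparison to $\RCD^*(K,N)$ spaces via the self-improved Bochner inequality and a maximum principle — is exactly the route the paper takes, so your outline of the comparison $\Gamma(v^{-1}\circ u)\ls 1$ matches Theorem \ref{in:comp}. However, there are two genuine gaps. First, you assume you can normalize the one-dimensional model $v$ ``so that its range coincides with that of $u$.'' This is not free: after normalizing $\min u=-1$, $\max u\ls 1$, the admissible models with $\min v=-1$ have $\max v$ ranging only over $[m_{K,N},1]$, where $m_{K,N}$ is the value at the second Neumann point of the one-sided extremal model, so one must prove $\max u\gs m_{K,N}$. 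This is Theorem \ref{thm3.4} of the paper, and it needs a separate, non-trivial argument: the monotonicity of Bakry--Qian's ratio $R(s)$, a mean value inequality near the minimum point of $u$, and the Bishop--Gromov inequality, combined to contradict the resulting decay $m(B_p(r))\ls C_2r^{l}$ against the lower bound $m(B_p(r))\gs C_3r^{N}$. Your proposal never addresses this step, and without it the comparison function $v$ you need may simply not exist.

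Second, you correctly single out the maximum-principle step for $G=\Gamma(\beta)$ as the main obstacle but leave it unresolved: evaluating at a point realizing $\operatorname{ess\,sup}G$ is, as you concede, not meaningful, and the raw function $G$ does not visibly satisfy a differential inequality to which a weak maximum principle applies. The paper's resolution has two ingredients you are missing. It does not work with $G$ but with $F$ defined by $e^{-g(f)}F=\Gamma(f)-(v'\circ v^{-1})^2(f)$, where $g$ is built from Bakry--Qian's auxiliary function $h_1$ satisfying $h_1'<\min\{Q_1(h_1),Q_2(h_1)\}$; this conformal-type factor is precisely what yields $\Delta^{ab}F\gs c_1F-c_2\sqrt{\Gamma(F)}$ on $\{F\gs\eps_0\}$ with a \emph{positive} zero-order coefficient $c_1$, together with $\Delta^{s}F\gs0$ from Theorem \ref{thm:sel}. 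And instead of De Giorgi--Nash--Moser theory it proves a bespoke integral maximum principle, Proposition \ref{prop3.1}, resting on the Sobolev inequality of Lemma \ref{lem2.9} for functions vanishing on a set of positive measure and on the fact that $\Gamma(u)=0$ a.e.\ on $\{u=\sup u\}$. So your plan points in the right direction, but the two devices that actually close the argument — the maximum comparison $\max u\gs m_{K,N}$ and the concrete maximum principle applied to the $e^{-g}$-modified quantity — are absent from the proposal.
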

 This comparison Theorem \ref{in:BQ} implies the classical Lichnerowicz's estimate \cite{lichnerowicz1958geometrie} for $K=n-1$ and also Zhong-Yang's estimate \cite{ZhongYang1984estimate} for $K=0$.
Some lower bounds of the spectral gaps have been extended to singular spaces. In \cite{shio99}, Shioya discussed spectral gaps in Rimannian orbifolds. In \cite{petruninharmonic}, Petrunin proved the Linchnerowiz's estimate to Alexandrov spaces with curvature $\gs 1$ in the sense of Alrexandrov. Recently, Theorem \ref{in:BQ} has been extended to Alexandrov spaces in \cite{qian2013sharp} using a notion of generalized lower Ricci curvature bounds in \cite{huichun2010new}, and Wang-Xia \cite{wang2013sharp} to Finsler manifolds.

In the last few years, several notions for ``the generalized Ricci curvature bounded below" on general metric spaces have been introduced.
Sturm \cite{sturm2006geometry,sturm2006geometry2} and Lott-Villani \cite{lott2009ricci}, independently, introduced a so-called \emph{curvature-dimension condition}, denoted by $CD$, on metric measure spaces via optimal transprotation. A refinement for this notion is given in Ambrosio-Gigli-Savar\'e \cite{ambrosio2014metric}, which is called \emph{Remannian curvature-dimension condition}, denoted by $RCD^*$. Recently, in two remarkable works, Ambrosio-Gigli-Savar\'e \cite{ambrosio2015bakry} and Erbar-Kuwada-Sturm \cite{erbar2013equivalence}, they proved the equivalence of the Remannian curvature-dimension condition and of the Bochner formular of Bakry-\'Emery via an abstract $\Gamma_2$-calculus, denoted by $BE$. Notice that in the case where $M$ is a (compact) Riemannian manifold.  Given two numbers $K\in \mathbb R$ and $N\gs1$, $M$ satisfying the Remannian curvature-dimension condition $RCD^*(K,N)$ is equivalent to that the Ricci curvature $Ric(M)\gs K$ and the dimension $dim\ls N$.

We will consider the spectral gap on metric measure spaces under a suitable Remannian curvature-dimension condition. Lott-Villani \cite{lott2007weak} and
Erbar-Kuwada-Sturm \cite{erbar2013equivalence} extended Linchnerowicz's estimate to metric measure spaces with $CD(K,N)$ or $RCD^*(K,N)$ for  $K>0$ and  $1\ls N <\infty$.  In this paper, we will extend Theorem \ref{in:BQ} to general metric measure spaces. Precisely, we have the following theorem.
\begin{thm} \label{thm:main}
Let $K\in \mathbb R$, $1\ls N < \infty$ and $d>0$. Let $(X,d,m)$ be a compact metric measure space satisfying the Remannian curvature-dimension condition $RCD^*(K,N)$ and the diameter $\ls d.$  Then the spectral gap $\lambda_1(X)$ has the following lower bound
\begin{equation}\label{eq1.2}
\lambda_1(X) \gs \hla(K,N,d),
\end{equation}
where $\hla(K,N,d)$ is given in Theorem \ref{in:BQ}.
\end{thm}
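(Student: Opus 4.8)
\emph{Proof strategy.} The plan is to transplant the gradient‑comparison method of Bakry--Qian, which among the three proofs of Theorem~\ref{in:BQ} is the one resting solely on Bochner's inequality; by the equivalence $\RCD^*(K,N)\Leftrightarrow BE(K,N)$ recalled in the introduction, that inequality — in fact its improved, measure‑valued form $\Ga(u)\gs\bigl(K|\nabla u|^2+\tfrac1N(\Delta u)^2\bigr)m$ — is at our disposal. First I would pass from the variational quantity \eqref{eq1.1} to an eigenfunction: on an $\RCD^*(K,N)$ space the Cheeger energy is quadratic, $\Lip(X)$ is dense in $W^{1,2}$ and $\Lip f=|\nabla f|$ $m$‑a.e.\ for Lipschitz $f$, so $\lambda_1(X)$ equals the bottom of the non‑trivial spectrum of the Laplacian $\Delta$ generated by $\cE$, attained by some $u$ with $\Delta u=-\lambda_1u$ and $\int_X u\,\rd m=0$. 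Since then $u\in D(\Delta^k)$ for all $k$, the local regularity theory on $\RCD^*(K,N)$ spaces gives that $u$ is Lipschitz, $|\nabla u|^2\in W^{1,2}\cap L^\infty$, and $\Delta(|\nabla u|^2)$ is a signed Radon measure whose density is bounded below by $2\bigl(K|\nabla u|^2+\tfrac{\lambda_1^2}{N}u^2-\lambda_1|\nabla u|^2\bigr)$. Replacing $u$ by $-u$ and rescaling, I normalize $\sup_X u=1$; following Bakry--Qian one reduces, at the cost of the eigenvalue only becoming larger, to the extremal configuration $\inf_X u=-1$.

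The core step is the \emph{Chen--Wang gradient estimate}. Let $v$ be the first Neumann eigenfunction of the model $v''-(N-1)Tv'=-\lambda_1v$ on the interval $(-L/2,L/2)$ of the (unique) length $L$ with $\hla(K,N,L)=\lambda_1$; then $v'>0$ on the open interval, $v$ is odd with range $[-1,1]$, and I set $\Psi:=(v')^2\circ v^{-1}$ on $[-1,1]$. The claim is
\[
|\nabla u|^2(x)\ \ls\ \Psi\bigl(u(x)\bigr)\qquad\text{for all }x\in X .
\]
I would prove this by a maximum principle applied to $P:=|\nabla u|^2-\Psi(u)$: using the chain and Leibniz rules for the measure‑valued Laplacian together with the model ODE for $v$ to compute $\Delta\bigl(\Psi(u)\bigr)$, and combining with the improved Bochner inequality for $\Delta(|\nabla u|^2)$, one arrives at a differential inequality $\Delta P\gs\langle Z,\nabla P\rangle+cP$ in the sense of measures, with $Z\in L^\infty$, $c$ bounded, and the singular part of $\Delta P$ of the favourable sign. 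A maximum principle for the measure‑valued Laplacian on a compact $\RCD$ space — of the type exploited in the $\RCD$ proofs of the Cheng maximal‑diameter and Cheeger--Gromoll splitting theorems, refined to account for the dependence of the lower‑order coefficient on $u$ itself — then yields $P\ls0$, once one disposes of the critical set $\{\nabla u=0\}$ and of the behaviour of $\Psi$ near the endpoints of the range.

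Granting the gradient estimate, the conclusion is a short one‑dimensional argument. The function $\beta:=v^{-1}\circ u:X\to[-L/2,L/2]$ satisfies $|\nabla\beta|=|\nabla u|/\sqrt{\Psi(u)}\ls1$, hence is $1$‑Lipschitz; by the extremal normalization its range is all of $[-L/2,L/2]$, so there are points $p,q\in X$ with $\beta(p)=-L/2$, $\beta(q)=L/2$, and therefore $L=|\beta(p)-\beta(q)|\ls d(p,q)\ls\mathrm{diam}(X)\ls d$. Since $\hla(K,N,\cdot)$ is non‑increasing, $L\ls d$ gives $\lambda_1=\hla(K,N,L)\gs\hla(K,N,d)$, which is \eqref{eq1.2}. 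When $K>0$ one invokes in addition the generalized Bonnet--Myers bound $\mathrm{diam}(X)\ls\pi\sqrt{(N-1)/K}$ — precisely the threshold beyond which $\hla(K,N,\cdot)$ ceases to decrease — so that the model problem is well posed.

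I expect the genuine difficulty to be entirely in the gradient‑estimate step: transplanting the Bakry--Qian maximum‑principle argument to a setting with no pointwise second‑order calculus, where Bochner's inequality holds only as an inequality of measures and $|\nabla u|^2$ is merely a Sobolev function, so that the control of the singular part of $\Delta P$ and of the critical set of $u$ must be carried out entirely within the measure‑valued $\Ga$‑calculus; the passage near the ends of the range, where $\Psi$ degenerates, is the most delicate point. (An alternative route, via the one‑dimensional localization/needle decomposition available on essentially non‑branching $\CD^*(K,N)$ spaces, would instead reduce $\lambda_1(X)$ directly to a family of weighted one‑dimensional Neumann problems whose densities satisfy the density form of $BE(K,N)$, for which the model is sharp; there the obstacle migrates to the disintegration and measurable‑selection machinery.)
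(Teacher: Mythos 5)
Your overall strategy (gradient comparison against a one--dimensional model, proved by a maximum principle for the measure--valued Laplacian, followed by a one--dimensional reduction via $\beta=v^{-1}\circ u$) is the same as the paper's, and your description of the gradient--estimate step correctly identifies the two technical inputs the paper actually uses: the self--improved Bochner inequality (Theorem \ref{thm:sel}/Corollary \ref{ineq:sfi2}, including the sign of the singular part) and a maximum principle adapted to $\mathbb M_\infty$ (Proposition \ref{prop3.1}). The gap is in the endgame. You normalize $\sup_X u=1$ and assert that ``following Bakry--Qian one reduces \ldots to the extremal configuration $\inf_X u=-1$,'' and you then take $v$ to be the \emph{odd} first eigenfunction on the \emph{symmetric} interval $(-L/2,L/2)$ with range $[-1,1]$. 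No such reduction exists: an eigenfunction on $X$ need not have $\max u=-\min u$, and one cannot modify it to make its range symmetric. If, after normalizing $\min u=-1$, one has $\max u<1$, then $\beta=v^{-1}\circ u$ only ranges over $[-L/2,\,v^{-1}(\max u)]$, and the $1$--Lipschitz bound yields $v^{-1}(\max u)+L/2\ls d$, which is strictly weaker than $L\ls d$; your conclusion ``its range is all of $[-L/2,L/2]$'' is exactly what fails.

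This is precisely why the paper (following Bakry--Qian) must do two additional things that your proposal omits. First, it works with \emph{asymmetric} models $v_{R,l}$ on $[a,b]$ chosen so that $\min v=-1=\min f$ and $\max v=\max f$; for such a model to exist one needs the lower bound $\max f\gs m_{K,N}$ on the maximum of the eigenfunction, which is Theorem \ref{thm3.4} and requires a separate, nontrivial argument combining the mean value inequality \eqref{in:mv} near the minimum point with the Bishop--Gromov inequality \eqref{ineq:BG} to derive a contradiction between volume growth rates $r^l$ and $r^N$. Second, having obtained $b-a\ls d$ for the asymmetric interval, one must invoke the symmetrization result (Theorem 13 of \cite{bakry2000some}) stating that the first Neumann eigenvalue on $[a,b]$ dominates $\hla(K,N,b-a)$ computed on the symmetric interval of the same length. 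Neither step is cosmetic: without the first, the matched model may not exist; without the second, the asymmetric model's eigenvalue is not comparable to $\hla(K,N,d)$. A further, more minor omission: when $K>0$ the model at eigenvalue exactly $\lambda_1=NK/(N-1)$ degenerates, which the paper handles by running the argument for every $R<K$ (using $\lambda_1\gs NK/(N-1)$ from \cite{erbar2013equivalence} to guarantee $\lambda_1>\max\{0,NR/(N-1)\}$) and letting $R\to K$ at the end; your appeal to Bonnet--Myers does not address this.
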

Our proof of Theorem \ref{thm:main} relies on the self-improvement of regularity under the Riemannian curvature-dimension condition  (Theorem \ref{thm:sel}) and a version of maximum principle, which is similar as the classical maximum principle for $C^2$-functions on mainifolds (see Proposition \ref{prop3.1} and Remark \ref{rem3.2}).

\begin{remark}When $N>1$ and $K=N-1$, the above Theorem \ref{thm:main} implies that
 $$\lambda_1(X)\gs \frac{N}{1-\cos^N(d/2)}.$$
In particular, this gives that if $\lambda_1(X)=N$, then $d=\pi$.  The combination of this and the maximal diameter theorem in \cite{ketterer2014cones} implies a Obata-type rigidity theorem for general metric measure spaces, which is also proved in \cite{ketterer2014obata} by Ketterer, independently.
\end{remark}

\section{Preliminaries}

In this section, we recall some basic notions and the calculus on metric measure spaces. For our purpose in this paper, we will focus only on the case of compact spaces.
Let $(X,d)$ be a compact metric space, and let $m$ be a Radon measure with ${\rm supp}(m)=X$.

\subsection{Riemanian curvature-dimension condition $RCD^*(K,N)$} $\ $

Let $(X,d,m)$ be a compact metric measure space. The Cheeger energy is given in \cite{ambrosio2014calculus} from the relaxation in $L^2(X,m)$ of the point-wise Lipschitz constant of Lipschitz functions. That is,
given a function $f\in L^2(X,m)$, the Cheeger energy of $f$ is defined \cite{ambrosio2014calculus} by
$${\rm Ch}(f):=\inf\Big\{\liminf_{j\to\infty}\frac{1}{2}\int_X({\rm Lip} f_j)^2dm\Big\},$$
where the infimum is taken over all sequences of Lipschitz functions $\{f_j\}$ converging to $f$ in $L^2(X,m)$. If ${\rm Ch}(f)<\infty$, then there is a (unique) so-called minimal
relaxed gradient $|Df|_w$ such that
$${\rm Ch}(f)=\frac{1}{2}\int_X|Df|_w^2\rd m.$$
The domain of ${\rm Ch}$ in $L^2(X,m)$,  ${\rm D(Ch)}$, is a Banach space with norm $\|f\|_{L^2}^2+\| |Df|_w\|_{L^2}^2$.
\begin{defn} (\cite{ambrosio2014calculus})\indent
 A metric measure space $(X,d,m)$ is called \emph{infinitesimally Hilbertian}  if the associated Cheeger energy ${\rm Ch}$ is a quadratic form.
\end{defn}
Let $(X,d,m)$ be an infinitesimally Hilbertian space. It is proved in \cite{ambrosio2014metric} that the scalar product
$$\Gamma(f,g):=\lim_{\epsilon\to0^+}\frac{|D(f+\epsilon g)|^2_w-|Df|^2_w}{2\epsilon}\qquad f,g\in {\rm D(Ch)}$$
exists in $L^1(X,m).$
In the following we denote by the Hilbert space $\mathbb V:={\rm D(Ch)}$ with the scalar product
$$(f,g)_{\mathbb V}:=\int_X\big(fg+\Gamma(f,g)\big)\rd m.$$

The quadratic form ${\rm Ch}$ canonically induces a symmetric, regular, strongly local Dirichlet form $({\rm Ch},\mathbb V)$. The regular property of $({\rm Ch},\mathbb V)$ comes from that $X$ is always assumed to be compact.
Moreover, for any $f,g\in \mathbb V$,  $\Gamma(f,g)$ provides an explicit expression of the \emph{Carr\'edu Champ} of the Dirichlet form $({\rm Ch},\mathbb V)$. The associated energy measure of $f$ is absolutely continuous with respect to $m$ with density
$\Gamma(f)=|Df|^2_w$.

Denote by $(H_t)_{t>0}$ and $\Delta$ the associated Markov semigroup in $L^2(X,m)$ and its generator respectively. Since $X$ is compact, according to \cite{rajala2012local}, the $RCD^*(K,N)$ condition implies that $(X,d,m)$ supports a global Poincar\'e inequality. Moreover, the operator $(-\Delta)^{-1}$ is a compact operator. Then the spectral theorem gives that the $\lambda_1(X)$ in \eqref{eq1.1} is the first non-zero eigenvalue of $-\Delta.$ (See, for example, \cite{davies}.)

 We adopt the notations given in \cite{ambrosio2013bakry}:
$$D_{\mathbb V}(\Delta):=
\big\{f\in \mathbb V  :\Delta f \in  \mathbb V \big\}$$
and, for every $p\in[1,\infty]$,
$$D_{L^p}(\Delta):=
 \big\{f\in \mathbb V\cap L^p(X,m) :\Delta f \in  L^2\cap L^p(X,m) \big\}. $$

\begin{defn}(\cite{ambrosio2013bakry,erbar2013equivalence})\indent
Let $K\in \mathbb R$ and $N\gs 1$. An infinitesimally Hilbertian space $(X,d,m)$ is said to satisfy $BE(K,N)$ \emph{condition} if the associated Dirichlet form $({\rm Ch},\mathbb V)$ satisfies
$$\int_X\Big(\frac{1}{2} \Gamma(f)\Delta\phi-\Gamma(f,\Delta f)\phi\Big)\rd m\gs K\int_X\Gamma (f)\phi \rd m +\frac{1}{N}\int_X(\Delta f)^2\phi \rd m$$
for all $f\in D_{\mathbb V}(\Delta)$ and all nonnegative $\phi\in D_{L^\infty}(\Delta).$
\end{defn}

According to \cite{ambrosio2013bakry,erbar2013equivalence}, the Riemanian curvature-dimension condition $RCD^*(K,N)$ is equivalent to the corresponding Bakry-\'Emery condition $BE(K,N)$ with a slight regularity. We shall use the following definition for $RCD^*(K,N)$ (Notice that $X$ is always assumed to be compact in the paper).
\begin{defn} (\cite{ambrosio2013bakry,erbar2013equivalence}),\indent
Let $K\in \mathbb R$ and $N\gs1$. A compact, infinitesimally Hilbertian length space  $(X,d,m)$
is said to satisfy $RCD^*(K,N)$ \emph{condition} (or metric $BE(K,N)$ condition) if it satisfies $BE(K,N)$ and that every $f\in \mathbb V$ with $\|\Gamma(f)\|_{L^\infty}\ls1$ has a 1-Lipschitz representative.
\end{defn}
Recall that a (locally) compact metric $(X,d)$ is a length space if the distance between any two points in $X$ can be realized as the length of some curve connecting them.
Notice that if $(X,d,m)$ satisfies $RCD^*(K,N)$  condition then $d=d_{\rm Ch}$, where $d_{\rm Ch}$ is the induced metric by the Dirichlet form $({\rm Ch},\mathbb V)$.
For any $f\in \mathbb V$ with $\Gamma(f)\in L^\infty(X,m)$, we always identify $f$ with its Lipschitz representative. Moreover, $H_tf$, $H_t(|\nabla f|^2_w)$ and $\Delta H_tf$ have continuous representatives (see Proposition 4.4 of \cite{erbar2013equivalence}).

\subsection{The self-improvement of regularity on $RCD^*(K,N)$-spaces} $\ $

Let $K\in \mathbb R$ and $1\ls N<\infty$, and let $(X,d,m)$ be a compact metric measure space satisfying $RCD^*(K,N)$ condition.

Let us recall an extension of the generator $\Delta$ of $({\rm Ch},\mathbb V)$, which is introduced in \cite{ambrosio2013bakry,savare2013self}.
Denote by $\mathbb V'$ the set of continuous linear functionals $\ell :\bV\to\mathbb R,$ and $\bV'_+$ denotes the set of positive linear fucntionals $\ell \in \bV'$ such that $\ell( \varphi) \gs 0$ for all $\varphi \in \bV$ with  $ \varphi \gs 0$   m-a.e.  in $X $. An important characterization of functionals in $\mathbb V'_+$ is that, for each $\ell\in \mathbb V'_+$ there exists a unique corresponding Radon measure $\mu_\ell$ on $X$ such that
$$\ell(\varphi)=\int_X\tilde\varphi\rd\mu_\ell\quad \forall \varphi\in\mathbb V,$$
where $\tilde{\varphi}$ is a quasi continuous representative of $\varphi$. Denote by
$$\mathbb{M}_{\infty}:=\Big\{f\in \bV\cap L^{\infty}(X,m): \exists\ \mu
\ \ {\rm such\ that}\ \ -\cE(f,\varphi)=\int_X \tilde{\varphi} \rd \mu \quad \forall \varphi \in \bV\Big\},$$
where $\mu=\mu_+-\mu_-$ with $\mu_{+},\mu_- \in \bV'_+$.
When a function $f\in \mathbb{M}_{\infty}$, we set $\Delta^*f:=\mu,$ and denote its Lebesgue's decomposition w.r.t $m$ as $\Delta^* f=\Delta^{ab} f\cdot m+\Delta^sf$.
It is clear that if $f\in D(\Delta)\cap L^{\infty}(X,m)$ then $f\in \mathbb{M}_{\infty}$ and $\Delta^*f=\Delta f\cdot m.$

\begin{lem}\label{lem2.1} Let $K\in\mathbb R$ and $N\gs1$,  and let $(X,d,m)$ be a compact metric measure space  satisfying $RCD^*(K,N)$ condition.\\
{\rm(i)\ (Chain rule, \cite[Lemma 3.2]{savare2013self})}\ \ \  If $g\in  D(\Delta)\cap Lip(X)$ and $\phi\in C^{2}(\mathbb R)$ with $\phi(0)=0$, then we have
\begin{equation*}
\phi\circ g\in D(\Delta)\cap Lip(X)\qquad {\rm and}\qquad\Delta (\phi\circ g)=\phi'\circ g\cdot\Delta g +\phi''\circ g\cdot\Gamma(g);
\end{equation*}
{\rm(ii)\ (Leibniz rule, \cite[Corollary 2.7]{savare2013self})}\ \ \ If $g_1\in \mathbb{M}_{\infty}$ and $g_2\in D(\Delta)\cap Lip(X)$, then we have
\begin{equation*}
g_1\cdot g_2\in \mathbb{M}_{\infty}\qquad {\rm and}\qquad \Delta^*(g_1\cdot g_2)=g_2\cdot\Delta^*g_1+g_1\cdot\Delta g_2\cdot m+2\Gamma(g_1,g_2)\cdot m.
\end{equation*}
\end{lem}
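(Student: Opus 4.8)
The plan is to derive both identities from the standard calculus of the Carr\'e du champ $\Gamma$ of the Dirichlet form $(\mathrm{Ch},\bV)$: on an infinitesimally Hilbertian space $\Gamma$ obeys the usual chain and Leibniz rules, and the product of a $\bV$-function with a bounded Lipschitz function again lies in $\bV$ (with a control of the $\bV$-norm). Since $X$ is compact, every Lipschitz function is bounded, and $\Gamma(g)\ls(\Lip g)^2$ a.e.\ whenever $g$ is Lipschitz. Throughout I use that $\cE(f,h)=\int_X\Gamma(f,h)\rd m$ and that $-\cE(g,\psi)=\int_X\Delta g\cdot\psi\rd m$ for $g\in D(\Delta)$, $\psi\in\bV$, together with the characterization that $f\in\bMi$ exactly when $\cE(f,\varphi)=-\int_X\tilde\varphi\rd\mu$ for a signed Radon measure $\mu=\mu_+-\mu_-$, $\mu_\pm\in\bV'_+$, in which case $\Delta^*f=\mu$.

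For (i): $g(X)$ is contained in a compact interval $I\subset\bR$ on which $\phi$ is $C^2$, so $\phi\circ g,\phi'\circ g,\phi''\circ g$ are bounded and $\phi|_I,\phi'|_I$ are Lipschitz; hence $\phi\circ g$ and $\phi'\circ g$ are Lipschitz, in particular in $\bV$. To identify the Laplacian I would test against an arbitrary $h\in\bV\cap L^\infty(X,m)$ and compute, using the chain rule $\Gamma(\phi\circ g,h)=\phi'\circ g\,\Gamma(g,h)$, the Leibniz rule $\Gamma(g,(\phi'\circ g)h)=\phi'\circ g\,\Gamma(g,h)+h\,\Gamma(g,\phi'\circ g)$, and the chain rule $\Gamma(g,\phi'\circ g)=\phi''\circ g\,\Gamma(g)$,
\begin{align*}
-\cE(\phi\circ g,h)&=-\int_X\phi'\circ g\,\Gamma(g,h)\rd m\\
&=-\int_X\Gamma\bigl(g,(\phi'\circ g)h\bigr)\rd m+\int_X\phi''\circ g\,\Gamma(g)\,h\rd m\\
&=\int_X\bigl(\phi'\circ g\cdot\Delta g+\phi''\circ g\cdot\Gamma(g)\bigr)h\rd m,
\end{align*}
the last step because $(\phi'\circ g)h\in\bV$ and $g\in D(\Delta)$. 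The function $\phi'\circ g\cdot\Delta g+\phi''\circ g\cdot\Gamma(g)$ is in $L^2(X,m)$ ($\phi'\circ g,\phi''\circ g\in L^\infty$, $\Delta g\in L^2$, $\Gamma(g)\in L^\infty$), and the displayed identity extends from $\bV\cap L^\infty(X,m)$ to all of $\bV$ by density and continuity; hence $\phi\circ g\in D(\Delta)$ with the stated formula.

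For (ii): first $g_1g_2\in\bV\cap L^\infty(X,m)$, since $g_1\in\bV\cap L^\infty$, $g_2$ is bounded Lipschitz, and $|D(g_1g_2)|_w\ls|g_1|\,|Dg_2|_w+|g_2|\,|Dg_1|_w\in L^2$. For $\varphi\in\bV\cap L^\infty(X,m)$, put $\mu_1:=\Delta^*g_1$; expanding by the Leibniz rule for $\Gamma$ and then moving each derivative off the corresponding factor (off $g_2$ via $g_2\in D(\Delta)$, off $g_1$ via $g_1\in\bMi$) gives
\begin{align*}
\cE(g_1g_2,\varphi)&=\int_X g_1\,\Gamma(g_2,\varphi)\rd m+\int_X g_2\,\Gamma(g_1,\varphi)\rd m\\
&=-\int_X g_1\,\Delta g_2\,\varphi\rd m-\int_X g_2\,\tilde\varphi\rd\mu_1-2\int_X\Gamma(g_1,g_2)\,\varphi\rd m,
\end{align*}
using $\Gamma(g_2,g_1\varphi)=g_1\Gamma(g_2,\varphi)+\varphi\Gamma(g_1,g_2)$, $\Gamma(g_1,g_2\varphi)=g_2\Gamma(g_1,\varphi)+\varphi\Gamma(g_1,g_2)$, that $g_1\varphi,g_2\varphi\in\bV$, and $\widetilde{g_2\varphi}=g_2\tilde\varphi$ (legitimate because $g_2$ is continuous). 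Thus $-\cE(g_1g_2,\varphi)=\int_X\tilde\varphi\rd\mu$ with $\mu:=g_1\Delta g_2\cdot m+2\Gamma(g_1,g_2)\cdot m+g_2\,\mu_1$. It remains to check $g_1g_2\in\bMi$, i.e.\ that $\mu$ is a difference of elements of $\bV'_+$: its absolutely continuous part lies in $L^2(X,m)\hookrightarrow\bV'$ (here $g_1\Delta g_2\in L^2$ and $|\Gamma(g_1,g_2)|\ls\sqrt{\Gamma(g_1)}\,\Lip g_2\in L^2$), while for $g_2\mu_1$ one writes $g_2=g_2^+-g_2^-$ and $\mu_1=\mu_{1,+}-\mu_{1,-}$ and observes that each $\varphi\mapsto\int_X g_2^\pm\tilde\varphi\rd\mu_{1,\pm}=\langle\mu_{1,\pm},g_2^\pm\varphi\rangle$ is a nonnegative functional, continuous on $\bV$ because $\|g_2^\pm\varphi\|_{\bV}\ls C\|\varphi\|_{\bV}$. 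Extending the identity from $\bV\cap L^\infty(X,m)$ to $\bV$ by continuity yields $\Delta^*(g_1g_2)=g_2\Delta^*g_1+g_1\Delta g_2\cdot m+2\Gamma(g_1,g_2)\cdot m$.

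I expect the main obstacle to be the measure-theoretic bookkeeping in (ii): showing that $g_2\cdot\Delta^*g_1$ is again representable through positive functionals in $\bV'_+$ (this is precisely where the Lipschitz regularity of $g_2$, and not merely its boundedness, enters, via $\|g_2^\pm\varphi\|_{\bV}\ls C\|\varphi\|_{\bV}$), together with the care needed with quasi-continuous representatives so that manipulations such as $\widetilde{g_2\varphi}=g_2\tilde\varphi$ and $\int_X\tilde\varphi\rd(g_2\mu_1)=\int_X g_2\tilde\varphi\rd\mu_1$ are justified. Everything else reduces to the standard chain/Leibniz calculus for $\Gamma$ plus the density of $\bV\cap L^\infty(X,m)$ in $\bV$.
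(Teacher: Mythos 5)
Your proof is correct, and it follows the standard route: the paper itself gives no argument for Lemma \ref{lem2.1}, quoting both parts directly from Savar\'e's paper, and your integration-by-parts/duality computation (testing against bounded $\varphi\in\bV$, using the $\Gamma$-calculus, and verifying that $g_2\cdot\Delta^*g_1$ is still a difference of functionals in $\bV'_+$ via the Lipschitz bound $\|g_2^\pm\varphi\|_{\bV}\ls C\|\varphi\|_{\bV}$) is essentially the proof given in that reference. The only points worth making explicit are the density step (truncation of $h$, resp.\ $\varphi$, converges in $\bV$, which is standard for Dirichlet forms) and the fact that $\phi(0)=0$ is not needed here precisely because $X$ is compact, as the paper's Remark \ref{rem2.5} notes.
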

\begin{remark}\label{rem2.5}
We can take $\phi\in C^{2}(\mathbb R)$ without the restriction $\phi(0)=0$ in the Chain rule. This comes from
 the fact that $1\in D(\Delta)$ and $\Delta 1=0$, because $X$ is assumed to be compact.
 \end{remark}

The following self-improvement of regularity is given in Lemma 3.2 of \cite{savare2013self}. (See also Theorem 2.7 of \cite{garofalo2014li}).

\begin{thm} $($\cite{savare2013self,garofalo2014li}$)$ \label{thm:sel} \indent Let $K\in \mathbb R$ and $1\ls N<\infty$, and let $(X,d,m)$ be a compact metric measure space satisfying $RCD^*(K,N)$ condition.
If $f \in D_{\bV}(\Delta)\cap Lip(X)$, then we have $\Gamma(f) \in \mathbb{M}_{\infty}$ and
\begin{equation} \label{in:self}
\frac12 \Delta^*\Gamma(f)-\Gamma(f,\Delta f)\cdot m \gs K\Gamma(f)\cdot m +\frac{1}{N} (\Delta f)^2\cdot m.
\end{equation}
\end{thm}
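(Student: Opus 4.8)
The plan is to reduce the claimed inequality \eqref{in:self} to the integrated Bakry--\'Emery condition $BE(K,N)$ by testing against nonnegative functions $\phi$ from a sufficiently rich class, and then to upgrade the resulting distributional inequality to the stated measure inequality involving $\Delta^*\Gamma(f)$. The starting point is the $BE(K,N)$ inequality, which for all $f\in D_{\bV}(\Delta)$ and nonnegative $\phi\in D_{L^\infty}(\Delta)$ reads
\[
\int_X\Big(\tfrac12\Gamma(f)\Delta\phi-\Gamma(f,\Delta f)\phi\Big)\rd m\gs K\int_X\Gamma(f)\phi\,\rd m+\frac1N\int_X(\Delta f)^2\phi\,\rd m.
\]
Under the extra hypothesis $f\in Lip(X)$ we have $\Gamma(f)\in L^\infty$, so the left-hand side depends on $\phi$ only through $\int_X\Gamma(f)\Delta\phi\,\rd m=\cE\big(\text{(something)}\big)$; more precisely, if we knew $\Gamma(f)\in\bV$ we could integrate by parts and write $\int_X\Gamma(f)\Delta\phi\,\rd m=-\cE(\Gamma(f),\phi)$. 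The functional $\phi\mapsto -\cE(\Gamma(f),\phi)-2\int_X\Gamma(f,\Delta f)\phi\,\rd m$ is then bounded below by a signed measure ($2K\Gamma(f)\cdot m+\tfrac2N(\Delta f)^2\cdot m$) acting on $\phi$, hence dominated by a positive linear functional, which by the Riesz-type representation recalled before Theorem~\ref{thm:sel} is exactly the statement $\Gamma(f)\in\mathbb M_\infty$ together with \eqref{in:self}.

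The concrete steps I would carry out: First, establish that $\Gamma(f)\in\bV\cap L^\infty(X,m)$ when $f\in D_{\bV}(\Delta)\cap Lip(X)$; this is where the hypothesis $f\in D_{\bV}(\Delta)$ (i.e. $\Delta f\in\bV$, not merely $L^2$) is essential. One shows this via the semigroup: for $H_t f$ one has classical Bochner-type regularity ($H_tf\in D(\Delta)$, $\Gamma(H_tf)\in\bV$, with $L^2$-bounds coming from $BE(K,\infty)$ and the improved $BE(K,N)$), and then lets $t\to 0$, using that $\Delta f\in\bV$ to get the uniform bounds needed to pass to the limit. Second, fix a nonnegative $\phi\in D_{L^\infty}(\Delta)$ and rewrite the $BE(K,N)$ inequality, using $\int_X\Gamma(f)\Delta\phi\,\rd m=-\cE(\Gamma(f),\phi)$, as
\[
-\cE(\Gamma(f),\phi)-2\int_X\Gamma(f,\Delta f)\phi\,\rd m\gs \int_X\Big(2K\Gamma(f)+\tfrac2N(\Delta f)^2\Big)\phi\,\rd m.
\]
Third, define the linear functional $\ell(\phi):=-\cE(\Gamma(f),\phi)-\int_X\big(2\Gamma(f,\Delta f)+2K\Gamma(f)+\tfrac2N(\Delta f)^2\big)\phi\,\rd m$ on the dense subalgebra $D_{L^\infty}(\Delta)\subset\bV$; the inequality above says $\ell(\phi)\gs 0$ for all nonnegative such $\phi$. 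Fourth, show $\ell$ extends to a continuous positive functional on $\bV$ — here one needs that $\ell$ is $\bV$-bounded, which follows because $\cE(\Gamma(f),\cdot)$ is $\bV$-bounded ($\Gamma(f)\in\bV$) and the remaining terms are integration against $L^1\subset\bV'$ functions (using $\Gamma(f)\in L^\infty$, $\Delta f\in L^2$, and $\Gamma(f,\Delta f)\in L^1$); positivity on the dense cone of nonnegative functions in $D_{L^\infty}(\Delta)$ then extends to all nonnegative elements of $\bV$ by approximation. Fifth, invoke the representation of $\bV'_+$ by Radon measures to produce a measure $\nu\gs 0$ with $\ell(\phi)=\int_X\tilde\phi\,\rd\nu$; unwinding the definition of $\ell$ gives precisely $\Gamma(f)\in\mathbb M_\infty$ with $\Delta^*\Gamma(f)=2\Gamma(f,\Delta f)\cdot m+2K\Gamma(f)\cdot m+\tfrac2N(\Delta f)^2\cdot m+\nu$, i.e. \eqref{in:self}.

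The main obstacle is the first step: proving the a priori regularity $\Gamma(f)\in\bV$ (equivalently, that the $BE(K,N)$ inequality self-improves so that the Hessian/gradient of $\Gamma(f)$ exists in an $L^2$ sense) under only $f\in D_{\bV}(\Delta)\cap Lip(X)$. This is genuinely the content of the Savar\'e self-improvement argument and cannot be obtained by a one-line manipulation of $BE(K,N)$: it requires either the iterated $\Gamma_2$ / Bochner inequality run through the heat semigroup with careful commutation estimates, or Savar\'e's algebraic trick of applying $BE(K,\infty)$ to $f$ perturbed and using the pointwise inequality $\Gamma_2(f)\gs \tfrac1N(\Delta f)^2$ together with a Cauchy--Schwarz-type bound on $\Gamma(\Gamma(f),f)$ to close the estimate. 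Since the excerpt explicitly tells us this is Lemma~3.2 of \cite{savare2013self} (and Theorem~2.7 of \cite{garofalo2014li}), in the write-up I would quote that regularity input and devote the proof to steps two through five, i.e. the passage from the pointwise/integrated inequality to the measure-valued statement $\Delta^*\Gamma(f)$, which is the routine functional-analytic part. The only other point requiring care is the density and approximation argument in step four, ensuring that positivity of $\ell$ on nonnegative $\phi\in D_{L^\infty}(\Delta)$ — rather than on all of $\bV$ — still yields a genuine positive Radon measure; this uses that $D_{L^\infty}(\Delta)$ is a lattice-dense core for $\bV$, together with the continuity of $\ell$ just established.
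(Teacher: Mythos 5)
The paper offers no proof of Theorem \ref{thm:sel}: it is imported verbatim from Savar\'e \cite{savare2013self} and Garofalo--Mondino \cite{garofalo2014li}, so there is no in-paper argument to compare against. Your outline is a faithful reconstruction of how the result is actually established in those references, and you correctly locate the crux: everything except the a priori regularity $\Gamma(f)\in\bV\cap L^\infty$ is routine. Steps two through five (integrate by parts in the $BE(K,N)$ inequality, observe that $\phi\mapsto-\cE(\Gamma(f),\phi)-\int(2\Gamma(f,\Delta f)+2K\Gamma(f)+\tfrac2N(\Delta f)^2)\phi\,\rd m$ is nonnegative on nonnegative $\phi\in D_{L^\infty}(\Delta)$, extend by density via the positivity-preserving mollification $\phi\mapsto H_t\phi$, and represent the resulting element of $\bV'_+$ by a Radon measure) are sound and yield exactly \eqref{in:self} after dividing by $2$.

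Two caveats. First, there is a latent circularity in how you propose to ``quote the regularity input'': in \cite{savare2013self} the membership $\Gamma(f)\in\bV$ is not a separate preliminary fact but is obtained \emph{simultaneously} with the measure-valued Bochner inequality, by the same $\Gamma_2$-polarization argument (the key estimate $\Gamma(\Gamma(f))\ls C\,\Gamma(f)\cdot\gamma_2(f)$ run through the semigroup). So Lemma 3.2 of \cite{savare2013self} already contains essentially the full $N=\infty$ statement, and what your steps two through five genuinely add is only the dimensional term $\tfrac1N(\Delta f)^2$ and the bookkeeping; presented as a self-contained proof this division of labor should be made explicit. Second, in step four you need each summand of your functional $\ell$ to lie in $\bV'$: $\Gamma(f,\Delta f)\in L^2$ follows from $f\in Lip(X)$ and $\Delta f\in\bV$ via Cauchy--Schwarz, but $(\Delta f)^2$ is a priori only in $L^1$ (or $L^{\nu/2}$ by the Sobolev embedding), and integration against an $L^1$ density is not automatically continuous on $\bV$; this requires either the Sobolev embedding with $\nu\gs 3$, a truncation of $(\Delta f)^2$, or the observation that only the sum $\ell$, not each piece, needs to be represented. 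These are repairable technical points, not conceptual gaps.
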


A crucial fact, which is implied by the above inequality, is that the singular part of $\Delta^*f$ has a correct sign: $\Delta^s \Gamma(f)$ is non-negative.

Using the same trick as in the proof of Bakry-Qian \cite[Thm 6]{bakry2000some} and \cite[Thm 3.4]{savare2013self}, one can prove the following Corollary of Theorem \ref{thm:sel}:
\begin{cor}\label{ineq:sfi2}
Let $K\in \mathbb R$ and $1\ls N<\infty$, and let $(X,d,m)$ be a compact metric measure space satisfying $RCD^*(K,N)$ condition.
If $f \in D_{\bV}(\Delta)\cap Lip(X)$, then $\Delta^s f\gs 0$ and the following holds $m$-a.e on $\{x\in X:\Gamma(f)(x)\neq 0\}$,
\begin{equation}\label{in:modi}
\left(\frac12\Delta^{ab}\Gamma(f)-\Gamma(f,\Delta f)-K\Gamma(f)-\frac1N(\Delta f)^2\right)\gs
\frac{N}{N-1}\left(\frac{\Delta f}{N}  -\frac{\Gamma(f,\Gamma(f))}{2\Gamma(f)}\right)^2.
\end{equation}
\end{cor}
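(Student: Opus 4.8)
The assertion on the singular part is immediate: in \eqref{in:self} both the right-hand side and $\Gamma(f,\Delta f)\cdot m$ are absolutely continuous with respect to $m$ (recall $\Gamma(f,\Delta f)\in L^1$ because $f,\Delta f\in\bV$), so comparing the singular parts of the two sides of \eqref{in:self} gives $\tfrac12\Delta^s\Gamma(f)\gs 0$; the literal statement ``$\Delta^s f\gs0$'' is anyway trivial, since $f\in D_{\bV}(\Delta)$ forces $\Delta^*f=\Delta f\cdot m$. The substance is \eqref{in:modi}, and I would follow the Bakry--Qian ``change of function'' scheme: apply Theorem \ref{thm:sel} not to $f$ itself but to $h=\phi\circ f$ for a well-chosen one-parameter family of test functions $\phi$, re-express the resulting inequality in terms of $f$, and then optimise over the parameter.

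Concretely, fix $\lambda\in\mathbb R$ and set $\phi_\lambda(s):=(e^{\lambda s}-1)/\lambda$ if $\lambda\neq0$ and $\phi_0:=\mathrm{id}$; these are smooth functions with positive first derivative and $\phi_\lambda''/\phi_\lambda'\equiv\lambda$. The first step is to verify that $h_\lambda:=\phi_\lambda\circ f$ belongs to $D_{\bV}(\Delta)\cap Lip(X)$, so that Theorem \ref{thm:sel} is legitimately applicable to it. By the chain rule (Lemma \ref{lem2.1}(i) together with Remark \ref{rem2.5}), $h_\lambda\in D(\Delta)\cap Lip(X)$ with $\Delta h_\lambda=(\phi_\lambda'\circ f)\,\Delta f+(\phi_\lambda''\circ f)\,\Gamma(f)$; here $\phi_\lambda'\circ f$ and $\phi_\lambda''\circ f$ are bounded Lipschitz functions on the compact space $X$, hence lie in $\bV\cap L^\infty$, while $\Gamma(f)\in\mathbb{M}_{\infty}\subset\bV\cap L^\infty$ by Theorem \ref{thm:sel} and $\Delta f\in\bV$ by hypothesis, so (using that $\bV\cap L^\infty$ is stable under products and under multiplication by bounded Lipschitz functions) $\Delta h_\lambda\in\bV$.

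For the computation, the chain rule gives $\Gamma(h_\lambda)=(\phi_\lambda'\circ f)^2\Gamma(f)$ and, more generally, $\Gamma(\rho\circ f,\sigma\circ f)=(\rho'\circ f)(\sigma'\circ f)\Gamma(f)$ for smooth $\rho,\sigma$; applying the Leibniz rule (Lemma \ref{lem2.1}(ii)) with $g_1=\Gamma(f)\in\mathbb{M}_{\infty}$ and $g_2=(\phi_\lambda'\circ f)^2\in D(\Delta)\cap Lip(X)$ expands $\Delta^*\Gamma(h_\lambda)$. Inserting all of this into \eqref{in:self} applied to $h_\lambda$ and isolating the absolutely continuous parts (the singular part being simply $(\phi_\lambda'\circ f)^2\Delta^s\Gamma(f)\gs0$, already known), a direct calculation collapses to
\[
\tfrac12\Delta^{ab}\Gamma(f)-\Gamma(f,\Delta f)-K\Gamma(f)-\tfrac1N(\Delta f)^2\ \gs\ -\tfrac{N-1}{N}\Gamma(f)^2\lambda^2+\lambda\Big(\tfrac2N\Gamma(f)\,\Delta f-\Gamma(f,\Gamma(f))\Big),
\]
valid $m$-a.e.\ on $X$ for the fixed $\lambda$.

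Finally, since the above holds $m$-a.e.\ for each fixed $\lambda$, it holds off a single $m$-null set for all rational $\lambda$, hence --- both sides being polynomial in $\lambda$ --- for all $\lambda\in\mathbb R$. Taking the supremum over $\lambda$ pointwise on $\{\Gamma(f)\neq0\}$ then yields \eqref{in:modi}: for $N>1$ the quadratic in $\lambda$ is maximised at $\lambda=\tfrac{N}{2(N-1)\Gamma(f)^2}\big(\tfrac2N\Gamma(f)\Delta f-\Gamma(f,\Gamma(f))\big)$, and substituting and simplifying gives exactly $\tfrac{N}{N-1}\big(\tfrac{\Delta f}{N}-\tfrac{\Gamma(f,\Gamma(f))}{2\Gamma(f)}\big)^2$; for $N=1$, letting $\lambda\to\pm\infty$ forces $\tfrac{\Delta f}{N}=\tfrac{\Gamma(f,\Gamma(f))}{2\Gamma(f)}$ on $\{\Gamma(f)\neq0\}$ and \eqref{in:modi} reduces to $\tfrac12\Delta^{ab}\Gamma(f)-\Gamma(f,\Delta f)\gs K\Gamma(f)+(\Delta f)^2$, which is the $m$-a.e.\ part of \eqref{in:self}. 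The main obstacle I anticipate is the first step --- ensuring $h_\lambda\in D_{\bV}(\Delta)$ so that Theorem \ref{thm:sel} may be applied to it, which is precisely where the self-improvement $\Gamma(f)\in\mathbb{M}_{\infty}$ is indispensable --- together with the careful bookkeeping of the Leibniz-rule expansion of $\Delta^*\Gamma(h_\lambda)$; the passage from ``for each $\lambda$'' to ``for all $\lambda$'' and the concluding optimisation are routine.
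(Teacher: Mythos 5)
Your proposal is correct and follows exactly the route the paper intends: the paper offers no written proof of this corollary, only the remark that one uses ``the same trick as in the proof of Bakry--Qian [Thm 6] and [Thm 3.4]'' of Savar\'e, and that trick is precisely your change-of-function argument --- apply Theorem \ref{thm:sel} to $\phi_\lambda\circ f$ with $\phi_\lambda''/\phi_\lambda'\equiv\lambda$, expand via the chain and Leibniz rules of Lemma \ref{lem2.1}, isolate the absolutely continuous parts, and optimise over $\lambda$. Your verification that $\phi_\lambda\circ f\in D_{\bV}(\Delta)\cap Lip(X)$ (the point where $\Gamma(f)\in\mathbb{M}_{\infty}$ is needed), your quadratic-in-$\lambda$ computation, and your reading of the mis-stated claim ``$\Delta^s f\gs 0$'' as ``$\Delta^s\Gamma(f)\gs 0$'' are all sound.
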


For $\kappa \in \bR$ and $\theta \gs 0$ we denote the function
\[
\fs_\kappa (\theta)=\left\{
\begin{array}{ll}
\frac{1}{\sqrt \kappa} \sin (\sqrt \kappa \theta),&\qquad \kappa>0,\\
\theta,&\qquad \kappa=0,\\
\frac{1}{\sqrt{ -\kappa}} \sinh (\sqrt{-\kappa} \theta),&\qquad \kappa<0.
\end{array}
\right.
\]

\begin{prop}[Bishop-Gromov inequality, \cite{gigli2013optimal,sturm2006geometry2}]
For each $x_0 \in X$ and $0<r<R \ls \pi \sqrt{(N-1)/(K\vee 0)}$, we have
\begin{equation}\label{ineq:BG}
\frac{m(B_r(x_0))}{m(B_R(x_0))} \gs \frac{\int_0^r \fs_{\frac{K}{N-1}}(t)^{N-1} \rd t}{\int_0^R \fs_{\frac{K}{N-1}}(t)^{N-1} \rd t}.
\end{equation}
\end{prop}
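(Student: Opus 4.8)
This is the standard volume comparison under a lower Ricci bound, and it is already known for $\CD(K,N)$ spaces (\cite{sturm2006geometry2}); since $\RCD^*(K,N)$ spaces are infinitesimally Hilbertian and essentially non-branching they fall within that setting, so it suffices to recall the transport argument. One works with the R\'enyi entropy $\mathcal S_N(\mu\,|\,m):=-\int_X(\tfrac{\rd\mu}{\rd m})^{1-1/N}\rd m$ and the distortion coefficients $\tau^{(t)}_{K,N}(\theta)=t^{1/N}\big(\fs_{\frac{K}{N-1}}(t\theta)/\fs_{\frac{K}{N-1}}(\theta)\big)^{(N-1)/N}$. Fix $x_0\in X$ and $0<r<\ell\ls\pi\sqrt{(N-1)/(K\vee0)}$, and for small $\eps,\delta>0$ let $(\mu_t)_{t\in[0,1]}$ be a $W_2$-geodesic transporting $\tfrac1{m(B_\eps(x_0))}m|_{B_\eps(x_0)}$ onto $\tfrac1{m(A_\ell^\delta)}m|_{A_\ell^\delta}$, where $A_\ell^\delta:=B_{\ell+\delta}(x_0)\setminus B_{\ell-\delta}(x_0)$. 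Because geodesics issuing from points near $x_0$ are almost radial, the interpolant $\mu_{r/\ell}$ is supported, up to an error that vanishes as $\eps,\delta\to0$, in an annulus $A_r^{\delta'}$ with $\delta'\sim(r/\ell)\delta$; hence by Jensen's inequality $-\mathcal S_N(\mu_{r/\ell}\,|\,m)\ls m(A_r^{\delta'})^{1/N}$.

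On the other hand the displacement-convexity inequality of $\CD(K,N)$ bounds $-\mathcal S_N(\mu_{r/\ell}\,|\,m)$ below by $m(B_\eps(x_0))^{1/N}\!\int\!\tau^{(1-r/\ell)}_{K,N}(d(x,y))\rd\pi+m(A_\ell^\delta)^{1/N}\!\int\!\tau^{(r/\ell)}_{K,N}(d(x,y))\rd\pi$ along the optimal plan $\pi$; letting $\eps\to0$ removes the first term, and $\delta\to0$ turns the second into $m(A_\ell^\delta)^{1/N}\tau^{(r/\ell)}_{K,N}(\ell)(1+o(1))$. Comparing the two bounds, raising to the $N$-th power and dividing by the widths $2\delta'$ and $2\delta$, one gets $\dfrac{m(A_r^{\delta'})}{2\delta'}\gs\dfrac{m(A_\ell^\delta)}{2\delta}\Big(\dfrac{\fs_{\frac{K}{N-1}}(r)}{\fs_{\frac{K}{N-1}}(\ell)}\Big)^{N-1}(1+o(1))$, so that after $\delta\to0$ the function $\rho\mapsto\big(\tfrac{\rd}{\rd\rho}m(B_\rho(x_0))\big)/\fs_{\frac{K}{N-1}}(\rho)^{N-1}$ is non-increasing (for a.e.\ $\rho$). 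A Chebyshev-type inequality for $m(B_\rho(x_0))=\int_0^\rho\tfrac{\rd}{\rd s}m(B_s(x_0))\,\rd s$ on $[0,r]$ versus $[0,R]$ then gives \eqref{ineq:BG}. Closer to the $BE$-calculus of this paper, an alternative is to apply Corollary \ref{ineq:sfi2} to $f=d(x_0,\cdot)$: since $\Gamma(f)\equiv1$ away from $x_0$, the inequality reduces to $-\Gamma(f,\Delta f)\gs K+\tfrac1{N-1}(\Delta f)^2$, which along unit-speed geodesics from $x_0$ is the Riccati inequality $\psi'\ls-K-\tfrac1{N-1}\psi^2$ for $\psi:=\Delta d(x_0,\cdot)$; comparing with the model solution $(N-1)\fs_{\frac{K}{N-1}}'/\fs_{\frac{K}{N-1}}$ (and using $\psi\to+\infty$ as the arclength tends to $0$) yields the sharp Laplacian comparison, which integrated against a polar decomposition of $m$ about $x_0$ again gives \eqref{ineq:BG}.

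The crux of either argument is the low regularity of $d(x_0,\cdot)$ and of the level sets $\{d(x_0,\cdot)=\rho\}$: one must justify either the support-localization of the transport interpolants together with the double limit $\eps,\delta\to0$, or the polar decomposition of $m$, the measure-valued Laplacian of $d(x_0,\cdot)$ (whose singular part along the cut locus has the correct sign), and the associated coarea formula. Finally, the restriction $R\ls\pi\sqrt{(N-1)/(K\vee0)}$ is precisely what keeps the model profile $\fs_{\frac{K}{N-1}}$ strictly positive on $(0,R)$ --- equivalently, keeps the comparison geodesics free of conjugate points --- and is needed for either argument to close when $K>0$.
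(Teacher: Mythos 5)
The paper's own ``proof'' of this proposition is a two-line citation: by Corollary 1.5 of \cite{gigli2013optimal}, an $\RCD^*(K,N)$-space satisfies the measure contraction property $MCP(K,N)$, and the sharp Bishop--Gromov inequality with profile $\fs_{K/(N-1)}^{N-1}$ holds on $MCP(K,N)$-spaces by Remark 5.3 of \cite{sturm2006geometry2}. You instead re-derive the comparison from scratch via displacement convexity of the R\'enyi entropy, i.e.\ you unpack what is inside those references. That is a legitimate, more self-contained route, and your closing remarks about where the real technical work lies (localization of the interpolant's support, the double limit $\eps,\delta\to0$, and the role of the restriction $R\ls\pi\sqrt{(N-1)/(K\vee 0)}$) are accurate.

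Two substantive caveats. First, your opening premise --- that $\RCD^*(K,N)$ spaces ``fall within'' the $\CD(K,N)$ setting of \cite{sturm2006geometry2} --- is not something you may take for granted: $\RCD^*$ is built on the \emph{reduced} condition $\CD^*(K,N)$ (with $\sigma$-coefficients), and the implication from $\CD^*(K,N)$ to full $\CD(K,N)$ for essentially non-branching spaces was not available when this paper was written. The convexity inequality you invoke with the coefficients $\tau^{(t)}_{K,N}$ is therefore not directly licensed. What rescues the argument is that in your transport one marginal collapses to a Dirac mass at $x_0$, and in that degenerate situation the reduced condition --- indeed already $MCP(K,N)$, which $\RCD^*(K,N)$ \emph{does} imply --- yields exactly the $\fs_{K/(N-1)}^{N-1}$-distortion you need; this is precisely the reduction the paper performs by routing through $MCP(K,N)$. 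You should rephrase your first step accordingly. Second, your alternative route via Corollary \ref{ineq:sfi2} applied to $f=d(x_0,\cdot)$ does not literally apply: that corollary requires $f\in D_{\bV}(\Delta)\cap Lip(X)$, whereas the Laplacian of the distance function is in general only a measure (with a singular part on the cut locus), so the Bochner self-improvement cannot be fed the distance function directly; the Laplacian comparison on $\RCD^*$-spaces requires a separate argument. Keep the transport proof, but run it through $MCP(K,N)$ rather than $\CD(K,N)$, and treat the Riccati/Laplacian-comparison alternative as heuristic unless you supply that machinery.
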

\begin{proof}
By Corollary of 1.5 in \cite{gigli2013optimal}, $(X,d,m)$ satisfies $MCP(K,N)$ condition.  the desired Bishop-Gromov inequality \eqref{ineq:BG} holds on $MCP(K,N)$-spaces by Remark 5.3 of \cite{sturm2006geometry2}.
\end{proof}


We need also  the following  mean value inequality in \cite{mondino2014structure}. See also Lemma 2.1 of \cite{colding2011sharp}.
\begin{lem}\cite[Lemma 3.4]{mondino2014structure}
Let $f \in D(\Delta)$ be a non-negative, continuous function with $\Delta f \ls c_0$ $m$-a.e. Then there exists a constant $C(K,N,\text{diam}X)$ such that the following holds:
\begin{equation}\label{in:mv}
\dashint_{B_r(x)} f \rd m
\ls C (f(x)+c_0 r^2).
\end{equation}
\end{lem}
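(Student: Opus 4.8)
The plan is to derive \eqref{in:mv} from the heat semigroup $(H_t)_{t>0}$ together with the Gaussian lower bound for its heat kernel, both of which are available on a compact $RCD^*(K,N)$ space. The first ingredient is a semigroup monotonicity. Since $X$ is compact, $1\in D(\Delta)$ with $\Delta 1=0$, so $H_t1=1$ and $H_t$ is positivity preserving; for $f\in D(\Delta)$ the curve $t\mapsto H_tf$ is differentiable in $L^2(X,m)$ with $\tfrac{\rd}{\rd t}H_tf=H_t(\Delta f)$, hence $H_tf-f=\int_0^t H_s(\Delta f)\,\rd s$ in $L^2(X,m)$. From $\Delta f\ls c_0$ $m$-a.e.\ and positivity of $H_s$ one gets $H_s(\Delta f)\ls c_0$, so $H_tf\ls f+c_0t$ $m$-a.e.; since $f$ is continuous, $H_tf$ admits a continuous representative, and $\supp m=X$, this inequality holds at every point, and with $t=r^2$ it gives $H_{r^2}f(x)\ls f(x)+c_0r^2$.

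The second ingredient is the lower Gaussian estimate: on a compact $RCD^*(K,N)$ space (which is locally doubling and supports a Poincar\'e inequality) there are constants $c_1,c_2>0$ depending only on $K$, $N$ and $\mathrm{diam}\,X$ such that $p_t(x,y)\gs c_1\big(m(B_{\sqrt t}(x))\big)^{-1}\exp\big(-d^2(x,y)/(c_2t)\big)$ for all $0<t\ls(\mathrm{diam}\,X)^2$; compactness is precisely what makes these constants scale-free even when $K<0$. For $0<r\ls\mathrm{diam}\,X$ apply this with $t=r^2$: for $y\in B_r(x)$ the exponential is at least $e^{-1/c_2}$, so $p_{r^2}(x,y)\gs c_1e^{-1/c_2}/m(B_r(x))$, and since $f\gs0$,
\[
H_{r^2}f(x)=\int_X p_{r^2}(x,y)f(y)\,\rd m(y)\gs \frac{c_1e^{-1/c_2}}{m(B_r(x))}\int_{B_r(x)}f\,\rd m=c_1e^{-1/c_2}\dashint_{B_r(x)}f\,\rd m.
\]
Combining with the first step yields \eqref{in:mv} with $C=e^{1/c_2}c_1^{-1}=C(K,N,\mathrm{diam}\,X)$; the case $r>\mathrm{diam}\,X$ is trivial since then $B_r(x)=X$.

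I expect the only real obstacle to be the input of the second step, namely invoking (or reproving) the Li--Yau type Gaussian lower bound for the heat kernel on $RCD^*(K,N)$ spaces with the asserted dependence of the constants on $\mathrm{diam}\,X$; everything else is soft. If one prefers to stay within the tools introduced in this section, one can instead run the more classical argument (as in Colding's Lemma 2.1, \cite{colding2011sharp}): integrate the hypothesis $\Delta f\ls c_0$ against a radial Green-type comparison function built from $\fs_{K/(N-1)}$ and control the resulting volume ratios $m(B_s(x))/m(B_r(x))$ by the Bishop--Gromov inequality \eqref{ineq:BG}, at the cost of a longer computation handling the non-smooth metric spheres.
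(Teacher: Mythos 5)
The paper offers no proof of this lemma---it is imported verbatim from Mondino--Naber \cite[Lemma 3.4]{mondino2014structure}---and your argument is a correct reconstruction of essentially the proof given there: the semigroup monotonicity $H_{r^2}f\ls f+c_0r^2$ (valid pointwise after passing to continuous representatives, since $\supp m=X$) combined with the two-sided Gaussian heat-kernel bounds, which do hold on a compact $RCD^*(K,N)$ space with constants depending only on $K$, $N$ and $\mathrm{diam}\,X$ because doubling (from \eqref{ineq:BG}) and the Poincar\'e inequality of \cite{rajala2012local} yield a parabolic Harnack inequality. The one caveat is the final remark: for $r>\mathrm{diam}\,X$ the claimed bound is not ``trivial'' unless $c_0\gs 0$ (for $c_0<0$ the right-hand side eventually becomes negative while the left-hand side is nonnegative), so one should either restrict to $r\ls\mathrm{diam}\,X$---which is all the paper ever uses---or reduce the large-$r$ case to $r=\mathrm{diam}\,X$ under the standing assumption $c_0\gs0$.
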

At last, we need the following Sobolev inequality, whose proof is similar to that of Theorem 13.1 of \cite{hajlasz2000sobolev}. For the reader's convenience, we include a proof here.
\begin{lem}\label{lem2.9}
Let $E \subset X$ be an $m$-measurable subset with $m(E)>0$. Then there exists constants $\nu > 2$ and $\widetilde{C_S}$ which depends only on $K$, $N$, $X$ and $E$, such that for any $f \in \bV$ with $f=0$ $m$-a.e. in $E$, the following Sobolev inequality holds:
\begin{equation}\label{in:pSob}
\|f\|_{L^{\nu}(X)}\ls \widetilde{C_S} \left(\int_X \Gamma(f)\rd m\right)^{\frac12}.
\end{equation}
\end{lem}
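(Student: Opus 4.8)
The plan is to combine the global Poincar\'e inequality available on $\RCD^*(K,N)$-spaces with an iteration (Moser-type) argument, exactly as in the proof of Theorem 13.1 of \cite{hajlasz2000sobolev}. First I would record the two ingredients that come for free from the hypotheses: since $X$ is compact and satisfies $\RCD^*(K,N)$, it supports a global $(1,1)$- (hence $(1,2)$-) Poincar\'e inequality, and by the Bishop--Gromov inequality \eqref{ineq:BG} the measure $m$ is doubling. A metric measure space that is doubling and supports a Poincar\'e inequality admits a \emph{Sobolev--Poincar\'e} inequality: there exist $\nu>2$ and $C$ such that for every ball $B=B_R(x_0)$ and every $f\in\bV$,
\[
\left(\dashint_B |f-f_B|^{\nu}\rd m\right)^{1/\nu}\ls C\,R\left(\dashint_{2B}\Gamma(f)\rd m\right)^{1/2},\qquad f_B:=\dashint_B f\rd m.
\]
Here the exponent $\nu$ is the ``dimension'' $\log_2(\text{doubling constant})$ coming out of the iteration, and it can be taken $>2$; since $X$ is compact one may choose $R=\text{diam}(X)$ and $2B=X$, so this already gives
$\|f-f_X\|_{L^\nu(X)}\ls C_S\big(\int_X\Gamma(f)\rd m\big)^{1/2}$ with constants depending only on $K,N,X$.

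It remains to remove the mean value $f_X$ using the vanishing condition on $E$. This is the standard trick: if $f=0$ $m$-a.e. on $E$ with $m(E)>0$, then by Jensen and H\"older
\[
|f_X|=\frac{1}{m(X)}\left|\int_X f\rd m\right|
=\frac{1}{m(X)}\left|\int_{X\setminus E} f\rd m\right|
\ls \frac{m(X\setminus E)^{1-1/\nu}}{m(X)}\,\|f\|_{L^\nu(X)},
\]
and on the other hand $\|f\|_{L^\nu(X)}\ls\|f-f_X\|_{L^\nu(X)}+m(X)^{1/\nu}|f_X|$. Substituting the displayed bound on $|f_X|$ and absorbing (which is legitimate precisely because $m(E)>0$ forces the constant $m(X\setminus E)^{1-1/\nu}m(X)^{1/\nu-1}<1$), one obtains $\|f\|_{L^\nu(X)}\ls \widetilde{C_S}\,\|f-f_X\|_{L^\nu(X)}\ls \widetilde{C_S}\big(\int_X\Gamma(f)\rd m\big)^{1/2}$, with $\widetilde{C_S}$ depending only on $K$, $N$, $X$ and $E$ (through $m(E)$). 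This is the claimed inequality \eqref{in:pSob}.

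The main obstacle is the first step: producing the self-improved $L^\nu$ Sobolev--Poincar\'e inequality from the ``bare'' $(1,2)$-Poincar\'e inequality and doubling. On an abstract metric measure space one cannot differentiate, so the iteration must be run intrinsically — either via the telescoping/chaining argument over dyadic balls of Haj\l asz--Koskela, or by invoking that $(\bV,\|\cdot\|_{\bV})$ coincides with the Newtonian--Sobolev space $N^{1,2}(X)$ (so that $\Gamma(f)^{1/2}$ is an upper gradient up to a constant) and then quoting the Sobolev embedding theorem for $N^{1,p}$ on PI spaces. The bookkeeping of which quantities the exponent $\nu$ and the constants depend on is routine once the chaining is set up; I would carry out the chaining over a fixed finite cover of the compact space $X$ by balls, so that all geometric constants are absorbed into a single constant depending only on $K$, $N$ and $X$.
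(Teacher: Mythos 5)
Your proposal is correct and follows essentially the same route as the paper: doubling (from Bishop--Gromov) plus the Poincar\'e inequality yield the $L^\nu$ Sobolev--Poincar\'e inequality via Haj{\l}asz--Koskela, and then the mean value $\dashint_X f$ is removed using $f=0$ on $E$. The only (cosmetic) difference is in that last step: you bound $|\dashint_X f|$ by H\"older on $X\setminus E$ and absorb $\|f\|_{L^\nu}$, whereas the paper computes $\|\dashint_X f\|_{L^\nu(X)}=(m(X)/m(E))^{1/\nu}\|f-\dashint_X f\|_{L^\nu(E)}$ and applies Minkowski directly, avoiding any absorption; both give a constant depending on $E$ only through $m(E)$.
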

\begin{proof}
The above Bishop-Gromov inequality \eqref{ineq:BG} implies the doubling property, and by Theorem 2.1 of \cite{rajala2012local}, a Poincar\'e inequality holds. These two ingredients imply the following Sobolev inequality by Theorem 9.7 of \cite{hajlasz2000sobolev}: there exists constants $\nu>2$ and $C_S>0$, depending on $K$, $N$ and $\text{diam}X$, such that for all $f\in \bV$,
\begin{equation}\label{in:Sob}
 \left(\dashint_X|f-\dashint_Xf|^{\nu}\right)^{\frac{1}{\nu}} \ls C_S\left(\dashint_X \Gamma(f)\rd m\right)^{\frac 12},
\end{equation}
where $\dashint_X \Gamma(f):=\frac{1}{m(X)}\int_X \Gamma(f) \rd m$.

Note that $\dashint_X f$ is a constant and that $f=0$ on $ E$, thus we have $\|f-\dashint_X f\|_{L^{\nu}(E)}=(m(E))^{\frac{1}{\nu}}\cdot|\dashint_X f|$ and
\[
\begin{array}{ll}
\|\dashint_X f\|_{L^{\nu}(X)}&={m(X)}^{\frac1\nu}|\dashint_X f|\\
&=\left(\frac{m(X)}{m(E)}\right)^{\frac1\nu}\|f-\dashint_X f\|_{L^{\nu}(E)}\\
&\ls \left(\frac{m(X)}{m(E)}\right)^{\frac1\nu}\|f-\dashint_X f\|_{L^{\nu}(X)}.
\end{array}
\]
Then, by Minkowski inequality, we have
\[
\begin{array}{ll}
\|f\|_{L^{\nu}(X)} &\ \ls \|f-\dashint_X f\|_{L^{\nu}(X)}+\|\dashint_X f\|_{L^{\nu}(X)}\\
&\ \ls \left[1+(\frac{m(X)}{m(E)})^{\frac1\nu}\right]\|f-\dashint_X f\|_{L^{\nu}(X)}\\
&\stackrel{\eqref{in:Sob}}{\ls} \left[1+(\frac{m(X)}{m(E)})^{\frac1\nu}\right]\cdot C_S {m(X)}^{\frac{1}{\nu}-\frac12} \left(\int_X \Gamma(f)\rd m\right)^{\frac12}.
\end{array}
\]
Let $\widetilde{C_S}= C_S  {m(X)}^{\frac{1}{\nu}-\frac12} \left[1+\frac{m(X)}{m(E)})^{\frac1\nu}\right]$, thus we have completed the proof.
\end{proof}

\section{eigenvalue estimate for $RCD^*(K,N)$-spaces}
Let $K\in \mathbb R$ and $1\ls N<\infty$, and let $(X,d,m)$ be a compact $RCD^*(K,N)$-space.
We need a version of maximum principle on $X$ as follows.
\begin{prop}\label{prop3.1}
Let $u\in \mathbb M_\infty$ and  let $\varepsilon_0>0$. If the measure $\Delta^*u$ satisfies that
the singular part $\Delta^su\gs0$ on $X$ and that the absolutely continuous part
\begin{equation}\label{eq3.1}
\Delta^{ab}u\gs C_1\cdot u-C_2\cdot \sqrt{\Gamma(u)}\quad {\rm on }\quad \{x: u(x)\gs\varepsilon_0\}
\end{equation}
holds for some positive constants $C_1$ and  $C_2 $ $($they may depend on $\varepsilon_0$$)$. Then $u\ls\varepsilon_0$ $m$-a.e. on $X.$
\end{prop}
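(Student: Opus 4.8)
The plan is to run a weak maximum principle directly on the measure-valued inequality: for every level $\ell$ between $\varepsilon_0$ and the essential supremum of $u$, I would test against the truncation $(u-\ell)_+$, which plays the role of ``looking just below the top level'' in the classical argument. Let $\ell_0$ denote the essential supremum of $u$ over $X$; it is finite since $u\in\bMi\subset L^\infty(X,m)$. We must show $\ell_0\ls\varepsilon_0$, so suppose for contradiction that $\ell_0>\varepsilon_0$, and fix $\ell\in[\varepsilon_0,\ell_0)$. Put $\varphi:=(u-\ell)_+$. Because $X$ is compact and $u\in\bV$, we have $\varphi\in\bV$ with $\varphi\gs0$ $m$-a.e., and by the chain rule (locality of the Carr\'e du champ) $\Gamma(u,\varphi)=\Gamma(\varphi)=\Gamma(u)\,\mathbf{1}_{\{u>\ell\}}$ $m$-a.e.

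First I would feed $\varphi$ into the defining identity for $u\in\bMi$:
\[
-\int_X\Gamma(\varphi)\rd m=-\cE(u,\varphi)=\int_X\tilde\varphi\rd\Delta^* u=\int_X\varphi\,\Delta^{ab}u\rd m+\int_X\tilde\varphi\rd\Delta^s u .
\]
The singular term is nonnegative and can be discarded: measures representing functionals in $\bV'_+$ charge no set of zero capacity, hence so do $\Delta^* u$ and its singular part $\Delta^s u$; since $\varphi\gs0$ $m$-a.e.\ forces its quasi-continuous representative $\tilde\varphi\gs0$ quasi-everywhere and $\Delta^s u\gs0$ by hypothesis, $\int_X\tilde\varphi\rd\Delta^s u\gs0$. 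Thus $\int_X\varphi\,\Delta^{ab}u\rd m\ls-\int_X\Gamma(\varphi)\rd m$. On $\{u>\ell\}\subset\{u\gs\varepsilon_0\}$ the bound \eqref{eq3.1} together with $u>\ell\gs\varepsilon_0$ gives $\Delta^{ab}u\gs C_1\ell-C_2\sqrt{\Gamma(u)}$; multiplying by $\varphi\gs0$, integrating, and using $\sqrt{\Gamma(u)}=\sqrt{\Gamma(\varphi)}$ on $\{u>\ell\}$ yields
\[
\int_X\Gamma(\varphi)\rd m+C_1\ell\int_X\varphi\rd m\;\ls\;C_2\int_X\varphi\,\sqrt{\Gamma(\varphi)}\rd m .
\]

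The decisive step is the pointwise Young inequality $C_2\,\varphi\sqrt{\Gamma(\varphi)}\ls\Gamma(\varphi)+\tfrac{C_2^2}{4}\varphi^2$, which absorbs the Dirichlet-energy term and leaves the purely zeroth-order inequality $C_1\ell\int_X\varphi\rd m\ls\frac{C_2^2}{4}\int_X\varphi^2\rd m$. Since $0\ls\varphi\ls\ell_0-\ell$ $m$-a.e., we have $\varphi^2\ls(\ell_0-\ell)\varphi$, so $C_1\ell\int_X\varphi\rd m\ls\frac{C_2^2}{4}(\ell_0-\ell)\int_X\varphi\rd m$. As $m(\{u>\ell\})>0$ for $\ell<\ell_0$, the factor $\int_X\varphi\rd m$ is strictly positive and may be cancelled, giving $C_1\ell\ls\frac{C_2^2}{4}(\ell_0-\ell)$ for every $\ell\in[\varepsilon_0,\ell_0)$. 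Letting $\ell\uparrow\ell_0$ forces $C_1\ell_0\ls0$, contradicting $\ell_0\gs\varepsilon_0>0$ and $C_1>0$; hence $\ell_0\ls\varepsilon_0$, i.e.\ $u\ls\varepsilon_0$ $m$-a.e.

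The only genuinely delicate point, and the one I expect to spell out carefully, is the vanishing of $\int_X\tilde\varphi\rd\Delta^s u$: it rests on the quasi-continuity of $\varphi$ and on the fact that the signed measure $\Delta^* u$ --- and therefore its singular part --- does not charge sets of zero capacity, standard facts in Dirichlet-form theory that nonetheless deserve an explicit word. Beyond that the argument is a routine test-function computation; conceptually the whole weight sits in the observation that once Young's inequality has eaten the gradient term, the problem collapses to an elementary inequality in the single parameter $\ell$ that simply cannot survive the limit $\ell\to\ell_0$.
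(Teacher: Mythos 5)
Your proof is correct, and it takes a genuinely different --- and simpler --- route than the paper's. Both arguments begin identically: test the defining identity for $u\in\bMi$ against the truncation $(u-k)_+$, discard the nonnegative singular contribution, and use the truncation/locality property of $\Gamma$. The fork comes at the use of \eqref{eq3.1}. The paper throws away the positive term $C_1u$ (since $u\gs\varepsilon_0>0$ there) and keeps only $-C_2\sqrt{\Gamma(u)}$, which after Cauchy--Schwarz leaves the homogeneous bound $\int_{X_k}\Gamma(\phi_k)\ls C_2^2\int_{X_k}\phi_k^2$ of \eqref{eq3.2}; closing from that point requires real machinery --- the Sobolev inequality of Lemma \ref{lem2.9} (hence Bishop--Gromov and the Poincar\'e inequality), the existence of a positive-measure sublevel set \eqref{eq3.3}, a lower bound on $m(X_k)$ uniform in $k$, and finally the fact from \cite{cheeger1999differentiability} that $\Gamma(u)=0$ a.e.\ on $\{u=\sup_Xu\}$. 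You instead retain the coercive zeroth-order term $C_1u\gs C_1\ell>0$, and that is exactly what lets Young's inequality absorb the full Dirichlet energy $\int\Gamma(\varphi)\,\rd m$ and reduce the problem to the scalar inequality $C_1\ell\ls\tfrac{C_2^2}{4}(\ell_0-\ell)$, which cannot survive $\ell\uparrow\ell_0$. Your route thus avoids the Sobolev embedding and all the measure-theoretic bookkeeping; it uses only $C_1>0$ and $\varepsilon_0>0$, hypotheses the paper's proof also relies on (in establishing \eqref{eq3.3}). The one point to make explicit, as you anticipate, is that $\int_X\tilde\varphi\,\rd\Delta^su\gs0$ because the representing measures of functionals in $\bV'_+$ charge no capacity-zero set and $\tilde\varphi\gs0$ quasi-everywhere --- but the paper's proof invokes the same fact at the same step, so this is not an additional burden.
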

\begin{remark}\label{rem3.2}
 If $X$ is a smooth Riemanian manifold, and if $u$ is a $C^2$-function, then the Proposition \ref{prop3.1} is a corollary of the classical maximum principle. In fact, if the assertion is false in this case, we assume that $u$ achieves its a maximum at point $p$, where $u(p)>\epsilon_0$. By using the maximum principle on $C^2$-functions, we have $$\Delta u(p)\ls0\quad {\rm and} \quad \Gamma(u)(p)=0.$$
Hence, by \eqref{eq3.1}, we have $u(p)\ls0.$ This contradicts to $u(p)> \varepsilon_0.$

In the setting of metric measure spaces, we need a new argument.
\end{remark}
\begin{proof}[Proof of Proposition \ref{prop3.1}]
 Since $u\in L^\infty(X,m)$, we have $\sup_{X} u<\infty$, where $\sup_Xu=\inf\{l: (u-l)_+=0, m{\rm-}a.e. \ {\rm in}\ X\}.$

Let us argue by contradiction. Suppose that
$\varepsilon_0<\sup_Xu.$

Take any constant $k\in[\varepsilon_0,\sup_X u)$ and set $\phi_k=(u-k)^+$. Then $\phi_k \in \bV$.
 Since singular part $\Delta^s u \gs0$, we have
\[
\begin{array}{ll}
-\int_X \Gamma(u,\phi_k)\,\rd m&\ =\int_X \widetilde{\phi_k} d\Delta^*u\\
 &\ \gs \int_X \widetilde{\phi_k} \Delta^{ab}u\,\rd m \\
 &\ = \int_{\{x: u(x)\gs k\}}  \phi_k \Delta^{ab}u\,\rd m \\
 & \overset{\eqref{eq3.1}}{\gs}  -C_2\int_{X_k} \phi_k \sqrt{\Gamma(u)}\rd m\\
&\ \gs -C_2 \left(\int_{X_k} {\phi_k}^2\right)^{\frac 12}\left(\int_{X_k} \Gamma(u)\right)^{\frac 12},
\end{array}
\]
where $X_k:=\{x: \Gamma(u)\not=0\}\cap\{x: u(x)> k\}.$

By the truncation property in \cite{sturm1994analysis} and $ \Gamma(u,\phi_k) =\Gamma(u)=\Gamma(\phi_k)$ $m$-a.e. in $X_k$, we have
\[
\int_X \Gamma(u,\phi_k)=\int_{X_k} \Gamma(u,\phi_k)=\int_{X_k} \Gamma(u)=\int_{X_k} \Gamma(\phi_k).
\]
The combination of the above two equations implies that
\begin{equation}\label{eq3.2}
\int_{X_k}\Gamma(\phi_k) \ls C_2^2 \int_{X_k} {\phi_k}^2.
\end{equation}

Now we claim that there exists a constant $ k_0\in[\varepsilon_0,\sup_X u)$  such that
\begin{equation}\label{eq3.3}
m(\{x: u(x)<k_0\})>0.
\end{equation}
Suppose that \eqref{eq3.3} fails for any $k\in[\varepsilon_0,\sup_Xu)$. That is,  $m(\{x: u(x)<k\})=0$ for any $k\in[\varepsilon_0,\sup_Xu).$ Letting $k$ tend to $\sup_Xu$, we get
$m(\{x: u(x)<\sup_Xu\})=0$. Thus $u=\sup_Xu$ $m$-a.e. in $X$. Now, we have $\Delta^*u=0$ and $\Gamma(u)=0$ $m$-a.e. in $X$. This contradicts \eqref{eq3.1} and proves the claim.

Fix such a constant $k_0\in[\varepsilon_0,\sup_Xu)$ such that \eqref{eq3.3} holds. Denote $E=\{x: u(x)<k_0\}$. For all $k\in (k_0,\sup_Xu)$, we have
$\phi_k=0$ $m$-a.e. in $E$. By applying Lemma \ref{lem2.9}, we conclude that
\begin{equation}\label{eq3.4}
\|\phi_k\|_{L^{\nu}(X)}\ls \widetilde{C_S} \left(\int_X \Gamma(\phi_k)\rd m\right)^{\frac12},\quad\ \forall k\in (k_0,{\rm sup}_Xu).
\end{equation}

We shall show that $m(X_k)>0$ for all $k\in (k_0,{\rm sup}_Xu)$. Fix any $k\in (k_0,{\rm sup}_Xu)$, the set $\{x: u(x)>k\}$ has positive measure, because $k<\sup_Xu$.
Hence, $\|\phi_k\|_{L^{\nu}(X)}>0.$ By using \eqref{eq3.4}, we get $m(\{x: \Gamma(\phi_k)\not=0\})>0.$
Note that
\begin{equation*}\Gamma(\phi_k)=
\begin{cases}
\Gamma(u) & m{\rm-}a.e. \ {\rm in}\ \{x: u(x)>k\}\\
0 & m{\rm -}a.e.\ {\rm in}\  \{x: u(x)\ls k\},
\end{cases}
\end{equation*}
we have  $\{x: \Gamma(\phi_k)\not=0\}\subset X_k$ up to a zero measure set. Thus, we get $m(X_k)\gs m(\{x: \Gamma(\phi_k)\not=0\}) >0$.

On the other hand, we have
\begin{align*}
\|\phi_k\|_{L^2(X_k)} &\ \ls \|\phi_k\|_{L^{\nu}(X_k)}\cdot (m(X_k))^{1/2-1/{\nu}} \ls \|\phi_k\|_{L^{\nu}(X )}\cdot (m(X_k))^{1/2-1/{\nu}}\\
&\ \ls \widetilde{C_S} \cdot\Big(\int_{X_k} \Gamma(\phi_k)\rd m\Big)^{1/2} (m(X_k))^{1/2-1/{\nu}}\\
&\overset{\eqref{eq3.2}}{\ls} \widetilde{C_S} \cdot C_2\cdot\|\phi_k\|_{L^2(X_k)}  (m(X_k))^{1/2-1/{\nu}},
\end{align*}
 where we have used that $\{x: \Gamma(\phi_k)\not=0\}\subset X_k$ up to a zero measure set again. Note that $m(X_k)>0$, hence  $\|\phi_k\|_{L^2(X_k)}\not=0$, for all $k\in (k_0,{\rm sup}_Xu)$,
 there is a constant $C>0$, such that $m(X_k)>C$ for all $k_0 \ls k< \sup_X u.$
Recall that $X_k=\{x: \Gamma(u)\not=0\}\cap\{x: u(x)> k\}$, by letting $k\to \sup_X u$, we have
\[
m(\{x: \Gamma(u)\not=0\}\cap \{u=\sup_X u \}) \gs C.
\]
This contradicts the fact that $\Gamma(u)=0$ a.e. in $\{u=\sup_X u \}$ (see Proposition 2.22 of \cite{cheeger1999differentiability}), and proves the proposition.
\end{proof}

Let us recall the one-dimensional model operators $L_{R,l}$ in \cite{bakry2000some}. Given $R\in\bR$ and $l>1$, the one-dimensional models $L_{R,l}$ are defined as follows: let $L=R/(l-1)$,

(1) If $R>0$, $L_{R,l}$ defined on $(-\pi/2\sqrt{L}, \pi/2\sqrt{L})$ by
\[
L_{R,l}v(x)=v''(x)-(l-1)\sqrt{L}\tan(\sqrt{L}x)v'(x);
\]

(2) If $R<0$, $L_{R,l}$ defined on $(-\infty, \infty)$ by
\[
L_{R,l}v(x)=v''(x)-(l-1)\sqrt{-L}\tan(\sqrt{-L}x)v'(x);
\]

(3) If $R=0$, $L_{R,l}$ defined on $(-\infty, \infty)$ by
\[
L_{R,l}v(x)=v''(x).
\]

Next we will apply Corollary \ref{ineq:sfi2} to eigenfunctions
 and prove the following comparison theorem on the gradient of the eigenfucntions, which is an extension of Kr\"oger's comparison result in \cite{kroger1992spectral}.
\begin{thm}\label{in:comp}
Let $(X,d,m)$ be a compact $RCD^*(K,N)$-space, and let $\lambda_1$ be  the first eigenvalue  on $X$.
Let $l\in \bR$ and $l\gs N$, and let  $f$ be an eigenfunction with respect to $\lambda_1$.  Suppose $\lambda_1>\max \left\{0,\frac{lK}{l-1}\right\}.$ Let $v$ be a Neumann eigenfunction of $L_{K,l}$ with respect to the same eigenvalue $\lambda_1$ on some interval. If $[\min f, \max f] \subset [\min v, \max v]$, then
\[
 \Gamma(f) \ls (v'\circ v^{-1})^2(f)\text{ m-a.e.}.
\]
\end{thm}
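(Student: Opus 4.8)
The plan is to compare $f$ with the model function $v$ by studying the function $P := \Gamma(f) - (v'\circ v^{-1})^2(f)$ and showing it is non-positive $m$-a.e.\ via the maximum principle of Proposition~\ref{prop3.1}. First I would set up notation: let $G := (v')^2\circ v^{-1}$, so that the claimed inequality is $\Gamma(f) \le G(f)$. The function $v$ satisfies the ODE $v'' - (l-1)T v' = -\lambda_1 v$ on its interval, which after changing variables to the ``energy'' picture gives an ODE for $G$ as a function of the $v$-variable; concretely one computes $\tfrac12 G'(s) = v''\circ v^{-1}(s) = (l-1)T(v^{-1}(s)) \sqrt{G(s)} - \lambda_1 s$, and a further differentiation expresses $G''$ in terms of $G$, $G'$, $s$ and the curvature parameter $K$. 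These are the standard Kr\"oger/Bakry–Qian identities for the one-dimensional model and I would record them as a preliminary lemma.

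Next, the key step is to compute $\Delta^* P$ (or rather to bound its absolutely continuous and singular parts) on the region where $P$ is large. Since $f$ is an eigenfunction, $f \in D_{\bV}(\Delta) \cap \Lip(X)$ with $\Delta f = -\lambda_1 f$, so Theorem~\ref{thm:sel} and especially Corollary~\ref{ineq:sfi2} apply: $\Delta^s\Gamma(f) \ge 0$, and on $\{\Gamma(f)\ne 0\}$ the reinforced Bochner inequality \eqref{in:modi} holds. For the term $G(f)$, I would use the Chain rule (Lemma~\ref{lem2.1}(i), together with Remark~\ref{rem2.5} so that no normalization of $G$ is needed), which gives $\Delta(G\circ f) = G'(f)\Delta f + G''(f)\Gamma(f)$ — note this term has \emph{no} singular part, so the singular part of $\Delta^* P$ equals $\tfrac12\Delta^s\Gamma(f) \ge 0$, verifying the first hypothesis of Proposition~\ref{prop3.1}. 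For the absolutely continuous part, substituting $\Delta f = -\lambda_1 f$ and the ODE identities for $G$, and then using \eqref{in:modi} to control $\tfrac12\Delta^{ab}\Gamma(f) - \Gamma(f,\Delta f)$ from below, I expect the whole expression $\Delta^{ab}P$ to be bounded below by a linear combination of $P$ itself and $\sqrt{\Gamma(f)}\,|P|^{1/2}$-type terms, i.e.\ an inequality of the shape \eqref{eq3.1} on $\{P \ge \varepsilon_0\}$; the crucial sign input making this work is the quadratic remainder term on the right-hand side of \eqref{in:modi}, combined with the hypotheses $l \ge N$ and $\lambda_1 > \max\{0, lK/(l-1)\}$ which guarantee the model ODE has the right monotonicity and that $G$, $G'$ behave as needed. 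One subtlety I would handle carefully: $P$ is only defined and the Bochner inequality only available where $\Gamma(f)\ne 0$, so I would either work with $\{x : P(x) \ge \varepsilon_0\} \subset \{\Gamma(f) \ne 0\}$ (automatic once $\varepsilon_0 > 0$, since $G(f)\ge 0$) or add a small regularization, and I would check that $f(X)\subset[\min v,\max v]$ keeps $G\circ f$ and its derivatives bounded so that $G(f)\in D(\Delta)\cap\Lip(X)$ and hence $P \in \mathbb{M}_\infty$.

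Having produced the differential inequality $\Delta^{ab}P \ge C_1 P - C_2\sqrt{\Gamma(P)}$ on $\{P\ge\varepsilon_0\}$ with $\Delta^s P \ge 0$, I would invoke Proposition~\ref{prop3.1} to conclude $P \le \varepsilon_0$ $m$-a.e.; since $\varepsilon_0 > 0$ is arbitrary, $P \le 0$ $m$-a.e., which is exactly the claim $\Gamma(f) \le (v'\circ v^{-1})^2(f)$. The main obstacle I anticipate is the second step: carrying out the chain-rule substitution and algebraic manipulation so that the messy lower bound coming from \eqref{in:modi} genuinely collapses to the form $C_1 P - C_2\sqrt{\Gamma(P)}$ — in the smooth Riemannian case this is Kr\"oger's computation and it relies on a somewhat delicate completion-of-the-square using the extra quadratic term $\tfrac{N}{N-1}(\tfrac{\Delta f}{N} - \tfrac{\Gamma(f,\Gamma(f))}{2\Gamma(f)})^2$, and here I must additionally make sure every identity used (chain rule, Leibniz rule for $\Gamma(f,\Gamma(f))$) is legitimate in the $\mathbb{M}_\infty$ calculus rather than pointwise, and that the constants $C_1, C_2$ depend only on $\varepsilon_0$ and the fixed data $K, l, \lambda_1, \min v, \max v$ and not on the point. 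A secondary technical point is justifying that $\Gamma(f)\in\Lip(X)$ or at least continuous enough (it is, by the regularity statements quoted after the definition of $RCD^*$ and by the self-improvement) so that $P$ is continuous and the sets $\{P\ge\varepsilon_0\}$ behave well.
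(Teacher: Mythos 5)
Your overall architecture matches the paper's: apply the self-improved Bochner inequality \eqref{in:modi} to the eigenfunction, derive a differential inequality of the form \eqref{eq3.1} for a comparison quantity on its superlevel sets, and conclude with Proposition \ref{prop3.1}. But there is a genuine gap in the middle step. You work directly with $P=\Gamma(f)-(v'\circ v^{-1})^2(f)$, whereas the paper (following Bakry--Qian) works with the weighted quantity $F$ defined by $e^{-g(f)}F=\Gamma(f)-(v'\circ v^{-1})^2(f)$, where $g$ is built from an auxiliary function $h_1$ chosen so that $h_1'<\min\{Q_1(h_1),Q_2(h_1)\}$ for two explicit quadratic expressions $Q_1,Q_2$ in $h_1$, $T$, $v$, $v'$. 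This weight is not cosmetic: after substituting the model ODE and \eqref{in:modi}, the coefficient of the linear term in the resulting inequality is $T_2=Q_1(h_1)-h_1'$ and the coefficient of the quadratic term satisfies $(v')^2T_1=Q_2(h_1)-h_1'$, and the entire point of the construction (this is where the hypothesis $\lambda_1>\max\{0,lK/(l-1)\}$ enters) is that such an $h_1$ exists making both strictly positive. If you take $h_1\equiv 0$, i.e.\ no weight, then $Q_2(0)\equiv 0$ and $Q_1(0)=T\bigl(\tfrac{2\lambda_1 v}{v'}-\tfrac{2l}{l-1}T\bigr)$, which vanishes wherever $T=0$ --- identically so in the case $K=0$. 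The unweighted computation therefore yields at best $\Delta^{ab}P\gs -C_2\sqrt{\Gamma(P)}$ with a vanishing (or sign-changing) linear coefficient, and Proposition \ref{prop3.1} is then inapplicable, since it requires $C_1>0$. The completion-of-the-square you anticipate does not by itself manufacture this positive coefficient; the exponential weight does. (In the smooth setting one can evaluate at an interior maximum point and avoid this, but the measure-theoretic maximum principle used here genuinely needs the strict positivity of $C_1$.)

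Two smaller points. The singular part of $\Delta^*P$ is $\Delta^s\Gamma(f)$, not $\tfrac12\Delta^s\Gamma(f)$; the factor $\tfrac12$ in \eqref{in:self} multiplies $\Delta^*\Gamma(f)$ in the Bochner inequality and does not appear in the decomposition of $\Delta^*P$ (the sign conclusion is unaffected). And continuity of $\Gamma(f)$ is not needed anywhere: Proposition \ref{prop3.1} only asks for $u\in\mathbb{M}_\infty$, and the inclusion $\{P\gs\eps_0\}\subset\{\Gamma(f)>0\}$, which you correctly observe follows from $(v'\circ v^{-1})^2(f)\gs 0$, is all that is required to invoke \eqref{in:modi}.
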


\begin{proof}
Without loss of generality, we may assume that $[\min f,\max f] \subset (\min v, \max v)$.

Denote by $T(x)$ the function such that
\[
L_{K,l}(v)=v''-Tv.
\]
As in Corollary 3 in section 4 of \cite{bakry2000some}, we can choose a smooth bounded function $h_1$ on $[\min f,\max f]$ such that
\[
h'_1<\min \{Q_1(h_1),Q_2(h_1)\},
\]
where $Q_1, Q_2$ are given by following
\[
Q_1(h_1):=-(h_1-T)\left(h_1-\frac{2l}{l-1}T+\frac{2\lambda_1 v}{v'}\right),
\]
\[
Q_2(h_1):=-h_1\left(\frac{l-2}{2(l-1)}h_1-T+\frac{\lambda_1 v}{v'}\right).
\]
We can then take a smooth function $g$ on $[\min f,\max f]$, $g\ls0$ and $g'=-\frac{h_1}{v'}\circ v^{-1}.$

According to \cite[Theorem 6.5]{ambrosio2014metric} (see also  \cite[Theorem 1.1]{jiang2014cheeger}),  we have that $f$ is Lipschitz continuous.  Notice that $\Delta f=-\lambda_1f\in \mathbb V$. Hence $f\in D_{\bV}(\Delta)\cap  Lip(X)$.

Now define a function $F$ on $X$ by
\[
\psi(f)F=\Gamma(f)-\phi(f),
\]
where $\psi(f):=e^{-g(f)}$ and $\phi(f)~:=(v'\circ v^{-1})^2(f)$.
Since $f\in  D_{\bV}(\Delta)\cap Lip(X)$, by Theorem \ref{thm:sel}, we have $\Gamma(f)\in \mathbb M_\infty$.  According to Lemma \ref{lem2.1} and Remark \ref{rem2.5}, we have  $\psi(f), \phi(f) \in D(\Delta)\cap Lip(X)$ and $F\in \mathbb M_\infty$. Moreover
\[
\Delta^*F=\frac{1}{\psi}\Delta^*\Gamma+\frac{1}{\psi}\big(-2\Gamma(\psi,F)-\Delta\psi F-\Delta \phi\big)\cdot m,
\]
where and in the sequel, we denote by $\Gamma=\Gamma(f)$ and $\phi=\phi(f),\psi=\psi(f).$
By using Theorem \ref{thm:sel} again, we have $\Delta^sF\gs 0$ on $X$ and
\[
\Delta^{ab}F=\frac{1}{\psi}\left(\Delta^{ab}\Gamma-2\Gamma(\psi,F)-\Delta\psi F-\Delta \phi\right)\quad m{\rm-}a.e.\ {\rm in}\ X.
\]

Since $l\gs N$, the $(X,d,m)$ satisfies also $RCD^*(K,l)$ condition.
Applying inequality \eqref{in:modi} to $f$ and using $\Delta f =-\lambda_1 f$, we have, for $m$-a.e. $x\in \{x: \Gamma(x)>0\}$,
\[
\Delta^{ab}\Gamma \gs -2\lambda_1\Gamma+\frac{2{\lambda_1}^2}{l}f^2+2K\Gamma+\frac{2l}{l-1}\left(\frac{\lambda_1f}{l}+\frac{\Gamma(f,\Gamma)}{2\Gamma}\right)^2.
\]

Fix arbitrarily a constant $\eps_0>0$. We want to show $F \ls \eps_0$ $m$-a.e. in $X$.

Since $F\ls e^g\cdot\Gamma\ls \Gamma$, we have  $ \{x: F(x)\gs {\eps_0}\}\subset \{x: \Gamma(x)>0\}$.
Following the argument from line 29 on page 1182 to line 10 on page 1183 of \cite{qian2013sharp}, we get:
\begin{equation}\label{ineq:main}
\Delta^{ab}F \gs \psi T_1\cdot F^2+ T_2\cdot F+T_3 \Gamma(f,F) \quad m{\rm-}a.e. \text{ on } \{x: F(x)\gs\eps_0\},
\end{equation}
where
\[
{v'}^2T_1=Q_2(h_1)-h'_1, \quad T_2=Q_1(h_1)-h'_1,
\]
and
\[
T_3=\frac{2l}{l-1}\left(-\frac{g'}{2}+\frac{1}{2\Gamma}\left(\frac{2\lambda_1f}{l}+\phi'+\phi g'\right)\right)+2g'.
\]
Note that both $T_1$ and $T_2$ are positive, $\Gamma$ is bounded on $X$ and $T_3$ is bounded on $\{x: F(x)\gs \eps_0\}$. It follows from \eqref{ineq:main} that
\begin{equation}\label{in:main}
\Delta^{ab}F \gs c_1\cdot F-c_2\cdot\sqrt{\Gamma(F)} \quad a.e. \text{ on } \{x: F(x)\gs \eps_0\}
\end{equation}
for some constant $c_2>0$ and  $c_1=\min_{s\in[\min f,\max f]} T_2(s)>0$.
By combining with $\Delta^sF\gs 0$ on $X$ and Proposition \ref{prop3.1}, we conclude that  $F \ls \eps_0$ $m$-a.e. in $X$.

At last, by the arbitrariness of $\eps_0$, we have $F \ls  0$ $m$-a.e. in $X$. This completes the proof of Theorem \ref{in:comp}.
\end{proof}

Let $v_{R,l}$ be the solution of the equation
\[
L_{R,l}v=-\lambda_1 v
\]
with initial value $v(a)=-1$ and $v'(a)=0$, where
\[
a=  \left \{
\begin{array}{ll}
-\frac{\pi}{2\sqrt{R/(l-1)}}& \qquad \text{ if }R>0,\\
0&\qquad \text{ if } R \ls 0.
\end{array}
\right.
\]
We denote
\[
b= \inf \{x>a:~v'_{R,l}(x)=0 \}
\]
and
\[
m_{R,l}=v_{R,l}(b)
\]
Note that $v_{R,l}$ is non-decreasing on $[a,b]$.

Next we show the following comparison theorem on the maximum of eigenfunctions.
\begin{thm}\label{thm3.4}
Let $(X,d,m)$ be a compact $RCD^*(K,N)$-space, and let $f$ be an eigenfunction with respect to the first eigenvalue $\lambda_1$ on $X$.
Suppose $\min f=-1, \max f \ls 1$. Then we have
\[
\max f \gs m_{K,N}.
\]
\end{thm}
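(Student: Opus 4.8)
The plan is to combine the gradient comparison of Theorem~\ref{in:comp} with an integral (or ODE) argument of Kr\"oger--Bakry--Qian type, following closely the classical strategy. First I would dispose of the trivial cases: if $\lambda_1 \ls \max\{0, NK/(N-1)\}$ the hypotheses of Theorem~\ref{in:comp} fail, but in that regime $K>0$ and $\lambda_1$ is controlled directly by the Lichnerowicz-type estimate of Lott--Villani / Erbar--Kuwada--Sturm, and one checks the conclusion $\max f \gs m_{K,N}$ holds by comparison with the model (this is a borderline check, not the heart of the matter). So assume $\lambda_1 > \max\{0, NK/(N-1)\}$. Let $v = v_{K,N}$ be the model solution on $[a,b]$ with $v(a)=-1$, $v'(a)=0$, $v(b) = m_{K,N} = \max v$, so that $v$ is non-decreasing on $[a,b]$ and $v^{-1}:[-1,m_{K,N}]\to[a,b]$ is well defined.

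The key step: argue by contradiction, supposing $\max f < m_{K,N}$. Since $\min f = -1$, the range $[\min f,\max f] = [-1,\max f] \subset [-1, m_{K,N}) \subset (\min v, \max v)$ after a harmless rescaling of the interval on which $v$ lives (extend $v$ slightly past $b$ if needed, or note $v$ may be continued as the model eigenfunction). Hence Theorem~\ref{in:comp} applies with $l = N$ and gives $\Gamma(f) \ls (v'\circ v^{-1})^2(f)$ $m$-a.e. Now I would exploit this pointwise bound together with the eigenvalue equation to derive a differential inequality for the distribution function of $f$. Concretely, set $F(t) = m(\{f < t\})$ for $t \in (-1,\max f)$; using the coarea-type identity $\int_X \Gamma(f)\,\rd m = -\int_X f \Delta f\,\rd m = \lambda_1 \int_X f^2\,\rd m$ localized via test functions $\chi(f)$, and the bound $\Gamma(f)^{1/2} \ls |v'|(v^{-1}(f))$, one obtains that the push-forward measure $f_\# m$ is dominated in a suitable ordering by $v_\# (\text{normalized Lebesgue on } [a,b])$; in particular, evaluating the identity $\int_X \Delta f \cdot \chi(f)\,\rd m = -\int_X \chi'(f)\Gamma(f)\,\rd m$ against $\chi$ chosen so that $\chi'(f) = \text{(something)}/\phi(f)$ forces $\max f$ to reach the first Neumann zero $b$ of the model, i.e. $\max f \gs m_{K,N}$, contradicting the assumption.

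I expect the main obstacle to be making the last comparison rigorous in the non-smooth setting: on a manifold one integrates the ODE for $v$ along level sets of $f$ and uses that $f$ has no critical level in $(-1,\max f)$, but here $f$ is merely Lipschitz with $\Gamma(f)$ a measure and the ``level sets'' must be handled measure-theoretically. The clean way around this is to avoid level-set geometry entirely and instead run the argument purely through the integrated identity $\lambda_1\int_X (f-c)^2\,\rd m = \int_X \Gamma(f)\,\rd m$ for appropriate shifts, combined with the pointwise inequality $\Gamma(f)\ls \phi(f)$ and a one-dimensional variational characterization: the function $w$ on $[a,b]$ solving $L_{K,N}w = -\lambda_1 w$ is the unique extremizer, so if $\max f < m_{K,N}$ one could build from $f$ a Lipschitz competitor violating the model's Rayleigh quotient on $[a, v^{-1}(\max f)]$, where the Neumann eigenvalue is strictly larger than $\lambda_1$ because the interval is strictly shorter than $[a,b]$. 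This monotonicity of the model Neumann eigenvalue in the interval length (a standard ODE Sturm--Liouville fact from \cite{bakry2000some}) is what ultimately yields the contradiction, and I would cite it rather than reprove it.
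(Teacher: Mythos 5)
Your overall framing---gradient comparison plus a Kr\"oger--Bakry--Qian integral argument---points in the right direction, but the decisive step of the paper's proof is absent from your plan, and neither of the mechanisms you offer in its place closes the argument. The paper does \emph{not} work with $l=N$: assuming $\max f<m_{K,N}$, it uses continuity of $l\mapsto m_{K,l}$ to pick a \emph{strictly larger} parameter $l>N$ with $\max f\ls m_{K,l}$ and $\lambda_1>\max\{0,lK/(l-1)\}$ (possible even in the Lichnerowicz borderline case, since $lK/(l-1)<NK/(N-1)\ls\lambda_1$ for $K>0$ by Theorem 4.22 of \cite{erbar2013equivalence}). The Bakry--Qian monotone ratio $R(s)$ then yields the sublevel-set bound \eqref{ineq:3.31}, $m(\{f\ls v(s)\})\ls 2C\rho(s)v'(s)=O((s-a)^{l})$ near $s=a$. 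The contradiction comes from the opposite bound at a minimum point $p$: the mean value inequality \eqref{in:mv} shows that at least half of $B_r(p)$ lies in $\{f\ls -1+2C_1r^2\}$, whence $m(B_r(p))\ls C_2 r^{l}$, while Bishop--Gromov \eqref{ineq:BG} gives $m(B_r(p))\gs C_3 r^{N}$; since $l>N$ these are incompatible for small $r$. Your plan never perturbs the dimension parameter and never invokes \eqref{in:mv} or \eqref{ineq:BG}, so even granting the push-forward domination you allude to, there is no exponent gap to exploit.

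The two substitutes you propose do not repair this. The claim that testing the identity against $\chi$ with $\chi'(f)=(\cdot)/\phi(f)$ ``forces $\max f$ to reach $b$'' is an assertion without a mechanism: the gradient bound $\Gamma(f)\ls\phi(f)$ from Theorem~\ref{in:comp} is an inequality in the harmless direction and cannot by itself force the range of $f$ to be large. The Rayleigh-quotient route fails for a structural reason: $g=v^{-1}\circ f$ maps $X$ \emph{into} the shorter interval $[a,v^{-1}(\max f)]$, so it lets you pull functions back from the interval to $X$, not push $f$ forward to a competitor for the one-dimensional Neumann problem with weight $\rho\,\rd s$; there is no a priori comparison between $g_{\#}m$ and $\rho\,\rd s$, and producing one is precisely the content of the monotone-ratio lemma --- which then still requires the endpoint volume analysis above to conclude. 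To fix the proof, replace your final step by the paper's: choose $l>N$, establish the monotonicity of $R(s)$ following Proposition 5 of \cite{bakry2000some} (as adapted in \cite{qian2013sharp}), and run the $r^{l}$ versus $r^{N}$ volume comparison at the minimum point.
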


\begin{proof}
We argue by contradiction. Suppose $\max f<m_{K,N}$.
Since $m_{K,l}$ is continuous on $l$, we can find some real number $l>N$ such that
\[
\max f \ls m_{K,l}\text{ and }\lambda_1>\max\{0,\frac{lK}{l-1}\}.
\]

Then following the proof of Proposition 5 in \cite{bakry2000some}, we obtain that the ratio
\[
R(s)=-\frac{\int_X f 1_{\{f\ls v(s)\}}\rd m} {\rho(s)v'(s)}
\]
is increasing on $[a,v^{-1}(0)]$ and decreasing on $[v^{-1}(0),b]$, let $L=K/(l-1)$, the function $\rho$ is
\[
\rho(s)~:=\left \{
\begin{array}{ll}
\cos^{l-1}(\sqrt{L}s)&  \text{ if } L>0\\
s^{l-1}&  \text{ if } L=0\\
\sinh^{l-1}(\sqrt{-L}s)&  \text{ if } L<0.
\end{array}
\right.
\]
It follows that for any $s\in[a,v^{-1}(-1/2)]$, since $v(s)\ls -\frac 12$, we have
\begin{equation} \label{ineq:3.31}
m(\{f\ls v(s)\})\ls -2\int_X f 1_{\{f\ls v(s)\}} \rd m \ls 2C \rho(s) v'(s),
\end{equation}
where $C=R(v^{-1}(0))$.

Take $p \in X$ with $f(p)=-1$. By
\[
f-f(p) \gs 0,\quad \text{ and } \Delta(f-f(p))=-\lambda_1 f \ls \lambda_1
\]
The mean value inequality \eqref{in:mv}, implies that
\[
\dashint_{B_r(p)} (f-f(p)) \rd m \ls C \lambda_1 r^2
\]
for all $r>0$ such that $B_r(x)\subset X$.
Denote $C_1= C \lambda_1$.
Let $A(r)=\{f-f(p)>2C_1 r^2\} \cap B_p(r)$. Then
\[
\frac{m (A(r))}{m (B_p(r))} \ls \frac{\int_{B_r(p)}(f-f(p)) \rd m}{2C_1r^2 m(B_r(p))} \ls \frac12.
\]
Hence
\[
\begin{array}{ll}
\frac12 m(B_r(p)) &\ls m(B_r(p)\backslash A(r)) \\
&\ls m(\{f-f(p) \ls 2C_1 r^2\})\\
&= m(\{f \ls -1+ 2C_1 r^2\}).
\end{array}
\]
By using \eqref{ineq:3.31} and following the argument from line 1 on Page 1186 to line 3 on page 1187 of \cite{qian2013sharp}, one can get that there exists a constant $C_2>0$ such that
\[
m(B_p(r)) \ls C_2 r^l
\]
for all sufficiently small $r>0$.

Fix $r_0>0$. By Bishop-Gromov inequality \eqref{ineq:BG}, we have

\[
m(B_p(r)) \gs \frac{m(B_p(r_0))}{\int_0^{r_0} \fs_{\frac{K}{N-1}}(t)^{N-1} \rd t}\int_0^r \fs_{\frac{K}{N-1}}(t)^{N-1} \rd t \gs C_3 r^N
\]
for any $0<r<r_0$.
The combination of the above two inequalies implies that $C_2 r^{l-N} \gs C_3$ holds for any sufficiently small $r$. Hence, we have $l \ls N$, which contradicts to the assumption $l >N$. Therefore, the Proof
of Theorem \ref{thm3.4} is finished.
\end{proof}

Now we are in the position to prove the main result---Theorem \ref{thm:main}.
\begin{proof}[Proof of Theorem \ref{thm:main}]
 Let $\lambda_1$ and $f$ denote respectively the first non-zero eigenvalue and a corresponding eigenfunction with $\min f=-1$ and $\max f \ls 1$.
By Theorem 4.22 of \cite{erbar2013equivalence}, we have $\lambda_1 \gs NK/(N-1)$ if $K>0$ and $N>1$. Now fix any $R<K$, we have
 \[
\lambda_1>\max \left \{\frac{NR}{N-1},0\right \}.
\]

Then we may use the results of Section 6 and Section 3 of \cite{bakry2000some}, we can find an interval $[a,b]$ such that the one-dimensional model operator $L_{R,N}$ has the first Neumann eigenvalue $\lambda_1$ and a corresponding eigenfunction $v$ with $v(a)=\min v=-1$ and $v(b)=\max v=\max f.$
By Theorem 13 in Section 7 of \cite{bakry2000some}, we have
\begin{equation}\label{in:last}
\lambda_1 \gs \hla(R,N,b-a),
\end{equation}
where $\hla(R,N,b-a)$ is the first non-zero Neumann eigenvalue of $L_{R,N}$ on the symmetric interval $(-\frac{b-a}{2}, \frac{b-a}{2})$.
Note that $f$ is continuous, we take $x$ and $y$ are two points on $X$ such that $f(x)=-1$ and $f(y)=\max f $.
 Let $g=v^{-1}\circ f$,
  then $g(x)=a$, $g(y)=b$ and, by Theorem \ref{in:comp}, $\Gamma(g) \ls1$ $m$-a.e. in $X$. Hence, we have
\[
b-a=g(y)-g(x) \ls d(x,y) \ls \max_{z_1,z_2 \in X}d(z_1,z_2):=d,
\]
where $d$ is the diameter of $X$.
Together with \eqref{in:last} and the fact that the function $\hla(R,N,s)$ decreases with s, we conclude
\[
\lambda_1 \gs \hla(R,N,d).
\]
By the arbitrariness of $R$, we finally prove the theorem.
\end{proof}

\noindent\textbf{Acknowledgements.} The second author is partially supported by NSFC 11201492.

\end{document}